\documentclass{amsart}

\usepackage[latin1]{inputenc}
\usepackage[T1]{fontenc}
\usepackage{enumerate}
\usepackage{mathtools}
\usepackage{amssymb}
\usepackage{mathrsfs}
\usepackage{mathabx}

\newtheorem{theorem}{Theorem}[section]
\newtheorem{lemma}[theorem]{Lemma}
\newtheorem{cor}[theorem]{Corollary}
\newtheorem{prop}[theorem]{Proposition}

\theoremstyle{definition}
\newtheorem{definition}[theorem]{Definition}

\theoremstyle{remark}

\numberwithin{equation}{section}

\newcommand{\N}{\mathbb N}

\newcommand{\T}{\tilde{T}}

\title{Inverse of frequently hypercyclic operators}
\author[Q. Menet]{Quentin Menet}

\address{Quentin Menet, Département de Mathématique, Université de Mons, 20 Place du Parc, Mons, Belgique}
\email{quentin.menet@umons.ac.be}
\subjclass[2010]{47A16}
\keywords{}
\thanks{Quentin Menet is a Research Associate of the Fonds de la
Recherche Scientifique - FNRS and was supported by the grant ANR-17-CE40-0021 of the French National 
Research Agency ANR (project Front)}

\allowdisplaybreaks[3]
\begin{document}
\begin{abstract}
We show that there exists an invertible frequently hypercyclic operator on $\ell^1(\mathbb{N})$ whose inverse is not frequently hypercyclic.
\end{abstract}
\maketitle
\section{Introduction}

Given $X$ a separable infinite-dimensional Banach space and $T$ a continuous and linear operator on $X$, we can consider, for each vector $x\in X$, the set  $\text{Orb}(x,T)=\{n\ge 0:T^n x\}$ which is called the orbit of $x$ under the action of $T$. Linear dynamics is the theory studying the properties of such orbits. One of the basic notions in linear dynamics is the hypercyclicity. An operator $T$ is said to be hypercyclic if there exists a vector $x\in X$ such that $\text{Orb}(x,T)$ is dense in $X$, or equivalently, such that for every non-empty open set $U \subset X$, the set $N_{T}(x,U):=\{n\ge 0:T^n x\in U\}$ is non-empty (or equivalently infinite). Several important notions related to hypercyclicity have been introduced and deeply investigated during the last decades. We will mention some of them in this paper but for more information concerning linear dynamics, the reader can refer to two books~\cite{Book1, Book2}.

While there is no hypercyclic operator in finite-dimension, each separable infinite-dimensional Banach space supports a hypercyclic operator~\cite{Ansari, Bernal}. We can wonder if it is possible to require more on the sets $N_T(x,U)$. In 2004, Bayart and Grivaux~\cite{BG1, BG2} have introduced the notion of frequent hypercyclicity. An operator $T$ is said to be frequently hypercyclic if there exists a vector $x\in X$ such that for every non-empty open set $U\subset X$, $\underline{\text{dens}}(N_T(x,U))>0$. In these papers, Bayart and Grivaux gave sufficient conditions for frequent hypercyclicity and showed that there are simple frequently hypercyclic operators on each space $\ell^p(\mathbb{N})$ (with $1\le p<\infty$). However, there exist separable infinite-dimensional Banach spaces supporting no frequently hypercyclic operator and even supporting no $\mathcal{U}$-frequently hypercyclic operator~\cite{Shkarin}. An operator $T$ is said to be $\mathcal{U}$-frequently hypercyclic if there exists a vector $x\in X$ such that for every non-empty open set $U\subset X$, $\overline{\text{dens}}(N_T(x,U))>0$. 

Several open questions concerning frequent hypercyclicity posed in \cite{BG2, BG3} have been challenging for many years. One of them, recently solved, concerned the link between chaos and frequent hypercyclicity. We recall that an operator $T$ is said to be chaotic if $T$ is hypercyclic and possesses a dense set of periodic points. Indeed, Bayart and Grivaux~\cite{BG3} showed in 2007 that there exists a frequently hypercyclic weighted shift on $c_0$ which is not chaotic while a chaotic operator which is not frequently hypercyclic was obtained in 2017~\cite{Menet}. This last counterexample required the introduction of a new family of operators, called operators of $C$-type, which have been deeply investigated in \cite{Monster}.

The purpose of this paper is to answer the following open question which can be found in \cite{BG2, Ruzsa, Grosse, Guirao}:
\begin{center}
Does the inverse of an invertible frequently hypercyclic operator is frequently hypercyclic?
\end{center}

It is well-known that the inverse of an invertible hypercyclic operator is always hypercyclic. It is a direct consequence of Birkhoff transitivity theorem. For $\mathcal{U}$-frequent hypercyclicity, it was recently proved that it is not the case anymore; there exists an invertible $\mathcal{U}$-frequently hypercyclic operator on $\ell^p(\mathbb{N})$ (with $1\le p<\infty$) whose inverse is not $\mathcal{U}$-frequently hypercyclic~\cite{Menet2}. This counterexample was obtained by considering a suitable operator of $C$-type. However, we know that if $T$ is invertible and frequently hypercyclic then $T^{-1}$ is $\mathcal{U}$-frequently hypercyclic~\cite{Ruzsa}. It means that if we want to exhibit a frequently hypercyclic operator $T$ whose inverse is not frequently hypercyclic then we first need to find a $\mathcal{U}$-frequently hypercyclic operator which is not frequently hypercyclic. Such operators exist~\cite{Ruzsa, Monster} but each of known examples is clearly not invertible. We will show in this paper that there exists an invertible frequently hypercyclic operator on $\ell^1(\mathbb{N})$ whose inverse is not frequently hypercyclic by introducing a generalization of operators of $C$-type.

\section{Generalization of operators of $C$-type}

Operators of $C$-type are associated to four parameters $v$, $w$, $\varphi $ and $b$, where

\begin{enumerate}
 \item[-] $v=(v_{n})_{n\ge 1}$ is a bounded sequence of non-zero complex numbers;
\item[-] $w=(w_{j})_{j\geq 1}$ is a sequence of complex numbers which is both bounded  
and bounded below, \mbox{\it i.e.} $0<\inf_{k\ge 1} \vert w_k\vert\leq \sup_{k\ge 1}\vert w_k\vert<\infty$,
\item[-] $\varphi $ is a map from $\N$ into itself, such that $\varphi 
(0)=0$, $\varphi (n)<n$ for every $n\ge 1$, and the set 
$\varphi ^{-1}(l)=\{n\ge 0\,:\,\varphi (n)=l\}$ is infinite for every 
$l\ge 0$;
\item[-] $b=(b_{n})_{n\ge 0}$ is a strictly increasing sequence of positive 
integers such that $b_{0}=0$ and $b_{n+1}-b_{n}$ is a multiple of 
$2(b_{\varphi (n)+1}-b_{\varphi (n)})$ for every $n\ge 1$.
\end{enumerate}

These operators are then defined as follows:

\begin{definition}\label{Definition 43}
Let $W_n=\prod_{b_n<j<b_{n+1}}  w_j$ such that $\inf_{n\geq 0} |W_n| >0$.
 The \emph{operator of $C$-type} $T_{v,w,\varphi,b}$ on $\ell^1(\N)$
 associated to the data $v$, $w$, $\varphi $, and $b$ given as above is defined by
\[
T_{v,w,\varphi,b}\ e_k=
\begin{cases}
 w_{k+1}\, e_{k+1} & \textrm{if}\ k\in [b_{n},b_{n+1}-1),\; n\geq 0,\\
v_{n}\, e_{b_{\varphi(n)}}-W_n^{ -1 } e_{
b_{n}} & \textrm{if}\ k=b_{n+1}-1,\ n\ge 1,\\
 -W_0^{-1}e_0& \textrm{if}\ 
k=b_1-1,
\end{cases}
\]
where $(e_k)_{k\ge 0}$ is the canonical basis of $\ell^1(\mathbb{N})$.
\end{definition}

These operators have the nice property that every finite sequence is periodic. More precisely, for every $k\in[b_n,b_{n+1})$, we have
\[T_{v,w,\varphi,b}^{2(b_{n+1}-b_n)}e_k=e_k\quad  (\text{see \cite[Fact 6.4]{Monster}}).\]
Periodic points are in general quite helpful for studying dynamical properties of an operator. For instance, a simple criterion for frequent hypercyclicity based on the behaviour of periodic points was given in \cite[Theorem 5.31]{Monster}. Unfortunately, if periodic points of an invertible operator $T$ satisfy the conditions of this criterion then periodic points of $T^{-1}$ (which are periodic points of $T$) will also satisfy these conditions. Therefore, we cannot use this criterion in order to establish the frequent hypercyclicity of our counterexample and it seems better to perturb these periodic points to prevent frequent hypercyclicity being transmitted to its inverse.

For this reason, we will introduce a new family of operators which contains operators of $C$-type but also operators for which finite sequences are not periodic.

\begin{definition}\label{Definition 43}
Let $R=(R_n)_{n\ge 0}$ with $\inf_{n\geq 0} |R_n| >0$.
The operator $T_{v,w,\varphi,b,R}$ on $\ell^1(\N)$
 associated to the data $v$, $w$, $\varphi $, $b$ and $R$ given as previously is defined by
\[
T_{v,w,\varphi,b,R}\ e_k=
\begin{cases}
 w_{k+1}\, e_{k+1} & \textrm{if}\ k\in [b_{n},b_{n+1}-1),\; n\geq 0,\\
v_{n}\, e_{b_{\varphi(n)}}-R_n^{ -1 } e_{
b_{n}} & \textrm{if}\ k=b_{n+1}-1,\ n\ge 1,\\
 -R_0^{-1}e_0& \textrm{if}\ 
k=b_1-1.
\end{cases}
\]
\end{definition}

The operator $T_{v,w,\varphi,b,R}$ is thus an operator of $C$-type as soon as $R_n=W_n$ for every $n$. A direct consequence of this generalization lies in the fact that the elements in $c_{00}$ are in general not periodic for $T_{v,w,\varphi,b,R}$. However, since we want to deduce dynamical properties of $T_{v,w,\varphi,b,R}$ by investigating the behaviour of orbits of finite sequences, we would like that orbits of finite sequences remain simple to study. For this reason, we first show that under some additional conditions, every vector $e_k$ is an eigenvector of $T^n$ for some $n$. This is the purpose of the following lemma.

\begin{lemma}\label{Lem}
Let $W_n=\prod_{b_n<j<b_{n+1}}  w_j$ and $R=(R_n)_{n\ge 0}$ with $\inf_{n\geq 0} |R_n| >0$. If for every $n\ge 1$, we have 
\[R_n^{-1} W_n=(R^{-1}_{\varphi(n)}W_{\varphi(n)})^{\frac{b_{n+1}-b_n}{b_{\varphi(n)+1}-b_{\varphi(n)}}},\] then for every $n\ge 0$, every $k\in[b_n,b_{n+1})$,
\[T_{v,w,\varphi,b,R}^{2(b_{n+1}-b_n)}e_k=(R^{-1}_nW_n)^2 e_k.\]
\end{lemma}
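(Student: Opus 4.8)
The plan is to argue by induction on $n$ and to isolate the effect of a single ``pass'' through block $n$, i.e.\ the action of $T^{\ell_n}$ where I set $\ell_n := b_{n+1}-b_n$ and $\lambda_n := R_n^{-1}W_n$, so that the target reads $T^{2\ell_n}e_k = \lambda_n^2 e_k$. For $k = b_n+r$ with $0\le r<\ell_n$ I would first compute $T^{\ell_n}e_k$ exactly. The initial $\ell_n-r$ applications of $T$ are pure weighted shifts carrying $e_{b_n+r}$ up to $e_{b_{n+1}-1}$ and then triggering the wrap rule, producing $W_n^{(r)}\bigl(v_n e_{b_{\varphi(n)}} - R_n^{-1} e_{b_n}\bigr)$ with $W_n^{(r)} := \prod_{b_n+r<j<b_{n+1}} w_j$; the remaining $r$ applications shift the $e_{b_n}$-part back up to $e_{b_n+r}$, contributing the weight $\prod_{b_n<j\le b_n+r} w_j$, which telescopes with $W_n^{(r)}$ to give exactly $W_n$. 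Thus I expect the clean decomposition
\[
T^{\ell_n}e_{b_n+r} = -\lambda_n\, e_{b_n+r} + W_n^{(r)} v_n\, T^{r} e_{b_{\varphi(n)}}\qquad(n\ge 1),
\]
while for $n=0$ the $v$-term is absent (the definition only wraps to $-R_0^{-1}e_0$), so $T^{\ell_0}e_k = -\lambda_0 e_k$ on block $0$; squaring this operator identity gives the base case $T^{2\ell_0}e_k = \lambda_0^2 e_k$ at once.

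For the inductive step I would apply $T^{\ell_n}$ a second time to the displayed formula. Since powers of $T$ commute, the wrap-back term reproduces $\lambda_n^2 e_{b_n+r}$ together with a cross term $-\lambda_n W_n^{(r)} v_n\, T^r e_{b_{\varphi(n)}}$, and the second pass on the $v$-term produces $W_n^{(r)} v_n\, T^{r}\bigl(T^{\ell_n} e_{b_{\varphi(n)}}\bigr)$. Everything therefore collapses to $\lambda_n^2 e_{b_n+r}$ provided I can establish the single eigenvector identity
\[
T^{\ell_n} e_{b_{\varphi(n)}} = \lambda_n\, e_{b_{\varphi(n)}}.
\]
This is exactly where the arithmetic hypotheses enter. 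Because $b_{n+1}-b_n$ is a multiple of $2(b_{\varphi(n)+1}-b_{\varphi(n)})$, the ratio $q := \ell_n/\ell_{\varphi(n)}$ is an even integer, so $T^{\ell_n} = (T^{2\ell_{\varphi(n)}})^{q/2}$; applying the induction hypothesis to the block $\varphi(n)$ (legitimate since $\varphi(n)<n$) gives $T^{2\ell_{\varphi(n)}} e_{b_{\varphi(n)}} = \lambda_{\varphi(n)}^2 e_{b_{\varphi(n)}}$, hence $T^{\ell_n} e_{b_{\varphi(n)}} = \lambda_{\varphi(n)}^{q} e_{b_{\varphi(n)}}$. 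The stated relation $R_n^{-1}W_n = (R_{\varphi(n)}^{-1}W_{\varphi(n)})^{q}$ is precisely $\lambda_n = \lambda_{\varphi(n)}^{q}$, which closes the step.

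The only genuinely delicate point I anticipate is bookkeeping rather than conceptual: tracking the weighted-shift factors so that $W_n^{(r)}$ times the return weight telescopes to $W_n$ independently of $r$, and keeping the decomposition of $T^{\ell_n} e_{b_n+r}$ tidy enough that the cross terms visibly cancel after the second pass. What makes the cancellation exact is the commutativity of $T^r$ with $T^{\ell_n}$ together with the fact that $e_{b_{\varphi(n)}}$ is the \emph{start} of its block, so $T^r$ never disturbs the eigenvalue computation; I would also double-check that the $n=0$ case is consistent with $\varphi(0)=0$ so that the induction is well-founded.
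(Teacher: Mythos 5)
Your proposal is correct and follows essentially the same route as the paper: one pass through block $n$ gives $T^{\ell_n}e = -\lambda_n e + (\text{$v$-term})$, and the second pass cancels the cross term using the induction hypothesis at block $\varphi(n)$ together with the arithmetic hypothesis $\lambda_n = \lambda_{\varphi(n)}^{q}$. The only difference is organizational: the paper first reduces to the block-start vector $e_{b_n}$ (noting $e_k$ is a multiple of $T^{k-b_n}e_{b_n}$) and so avoids carrying the index $r$ and the telescoping weights $W_n^{(r)}$ through the computation, whereas you verify the same identity for general $k=b_n+r$ directly.
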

\begin{proof}
We remark that it suffices to show that for every $n\ge 0$, 
\[T_{v,w,\varphi,b,R}^{2(b_{n+1}-b_n)}e_{b_n}=(R^{-1}_nW_n)^2 e_{b_n}\]
since for every $k\in[b_n,b_{n+1})$, $e_k$ is a multiple of $T^{k-b_n}e_{b_n}$.

It follows from the definition of $T_{v,w,\varphi,b,R}$ that \[T_{v,w,\varphi,b,R}^{2(b_{1}-b_0)}e_{b_0}=(R^{-1}_0W_0)^2e_{b_0}.\]
Let $N\ge 1$. Assume that we have $T_{v,w,\varphi,b,R}^{2(b_{n+1}-b_n)}e_{b_n}=(R_n^{-1}W_n)^2 e_{b_n}$ for every $n<N$. Then
\[T_{v,w,\varphi,b,R}^{b_{N+1}-b_N}e_{b_N}=-R_N^{-1}W_N e_{b_N}+v_N W_N e_{b_{\varphi(N)}}\]
and since $b_{N+1}-b_N$ is a multiple of $2(b_{\varphi(N)+1}-b_{\varphi(N)})$, we have by induction hypothesis
\begin{align*}
T_{v,w,\varphi,b,R}^{2(b_{N+1}-b_N)}e_{b_N}&=T_{v,w,\varphi,b,R}^{b_{N+1}-b_N}\left(-R_N^{-1}W_N e_{b_N}+v_N W_N e_{b_{\varphi(N)}}\right)\\
&=-R_N^{-1}W_N \left(-R_N^{-1}W_N e_{b_N}+v_N W_N e_{b_{\varphi(N)}}\right)\\
&\quad+v_NW_N \left((R_{\varphi(N)}^{-1}W_{\varphi(N)})^2\right)^{\frac{b_{N+1}-b_N}{2(b_{\varphi(N)+1}-b_{\varphi(N)})}} e_{b_{\varphi(N)}}\\
&=(R_N^{-1}W_N)^2e_{b_N}.
\end{align*}
\end{proof}

In particular, we remark that for operators of $C$-type, we get the mentioned result that $T_{v,w,\varphi,b}^{2(b_{n+1}-b_n)}e_k=e_k$ for every $k\in [b_n,b_{n+1})$ since $R_n=W_n$. We now need to know when the operator $T_{v,w,\varphi,b,R}$ is invertible. In the paper \cite{Menet2}, the invertibility of operators of $C$-type was obtained by requiring that the sequence $(v_n)$ decreases sufficiently rapidly. Two adaptations will be necessary here. First we will not consider operators of $C$-type but operators $T_{v,w,\varphi,b,R}$ with $R_n=1$ for every $n$, and secondly we will need to consider a sequence $(v_n)$ which takes infinitely often the same value in order to get the desired counterexample. Note that it is because of this last condition on $v$ that we have to restrict ourselves to operators on $\ell^1(\mathbb{N})$.

\begin{prop}\label{invertible}
Assume that $R_n=1$ for every $n\ge 0$ and that
\[\lim_{N\to \infty}\sup_{n\in \varphi^{-1}(N)}|v_n|=0 \quad\text{and}\quad \sup_{l\ge 1}\Big(\sum_{m=0}^{m_l-1}\prod_{s=0}^{m}|v_{\varphi^s(l)}|\Big)<\infty\] where $m_l=\min\{s\ge 0:\varphi^s(l)=0\}$.
Then the operator $T_{v,w,\varphi,b,R}$ is invertible on $\ell^1(\mathbb{N})$ and 
\[
T_{v,w,\varphi,b,R}^{-1}\ e_k=
\begin{cases}
 \frac{1}{w_{k}}\, e_{k-1} & \textrm{if}\ k\in (b_{n},b_{n+1}),\; n\geq 0,\\
-\sum_{m=0}^{m_n-1}\Big(\prod_{l=0}^mv_{\varphi^l(n)}\Big) e_{b_{\varphi^{m+1}(n)+1}-1}- e_{b_{n+1}-1} & \textrm{if}\ k=b_{n},\ n\ge 1,\\
 -e_{b_1-1}& \textrm{if}\ k=0.
\end{cases}
\]
\end{prop}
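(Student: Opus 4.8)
The plan is to define the candidate operator $S$ on the canonical basis by the three-line formula in the statement, to check that $S$ is a bounded operator on $\ell^1(\N)$, and then to verify directly that $S$ is a two-sided inverse of $T:=T_{v,w,\varphi,b,R}$ (with $R_n=1$) on the dense subspace $c_{00}$; boundedness of both operators then upgrades these identities to all of $\ell^1(\N)$ and gives $T^{-1}=S$. Before anything else I would record that $m_n=\min\{s\ge 0:\varphi^s(n)=0\}$ is finite for every $n\ge1$, because $\varphi(j)<j$ for $j\ge1$ forces the chain $n>\varphi(n)>\varphi^2(n)>\cdots$ to reach $0$ in finitely many steps; hence the sum defining $Se_{b_n}$ is finite and $S$ is genuinely defined on each basis vector.

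For boundedness I would use that on $\ell^1(\N)$ the norm of an operator $A$ prescribed on the basis equals $\sup_k\|Ae_k\|_1$. That $T$ is bounded follows since $\sup_n|v_n|<\infty$ (this is already contained in the second hypothesis, whose $m=0$ term is $|v_l|$) and $\sup_k|w_k|<\infty$. For $S$, the vectors $e_k$ with $k\in(b_n,b_{n+1})$ give $\|Se_k\|_1=1/|w_k|\le 1/\inf_k|w_k|$, while
\[
\|Se_{b_n}\|_1=1+\sum_{m=0}^{m_n-1}\prod_{l=0}^{m}|v_{\varphi^l(n)}|\le 1+\sup_{l\ge1}\Big(\sum_{m=0}^{m_l-1}\prod_{s=0}^{m}|v_{\varphi^s(l)}|\Big),
\]
which is finite precisely by the second hypothesis (here I use that the indices $b_{\varphi^{m+1}(n)+1}-1$ are pairwise distinct, and distinct from $b_{n+1}-1$, because $\varphi^{m+1}(n)$ runs through the strictly decreasing chain above). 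Thus $\sup_k\|Se_k\|_1<\infty$ and $S$ is bounded.

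The core of the argument is the case-by-case verification of $TSe_k=e_k$ and $STe_k=e_k$. The two easy families are $k\in(b_n,b_{n+1})$, where $Se_k=\frac{1}{w_k}e_{k-1}$ and $Te_{k-1}=w_ke_k$ cancel, and $k=0$, where $Se_0=-e_{b_1-1}$ and $Te_{b_1-1}=-e_0$; the relations $STe_k=e_k$ for $k\in[b_n,b_{n+1}-1)$ and $k=b_1-1$ are equally immediate. The one substantial computation is $k=b_n$ with $n\ge1$: applying $T$ to $Se_{b_n}$ and using $Te_{b_{j+1}-1}=v_je_{b_{\varphi(j)}}-e_{b_j}$ for $j\ge1$, together with the convention $Te_{b_1-1}=-e_0$, turns the sum into a telescoping sum along the chain $\varphi(n),\varphi^2(n),\dots,\varphi^{m_n}(n)=0$, in which $P_m\,v_{\varphi^{m+1}(n)}=P_{m+1}$ for $P_m:=\prod_{l=0}^{m}v_{\varphi^l(n)}$; all intermediate terms cancel, the top boundary term $v_ne_{b_{\varphi(n)}}$ is killed by $-Te_{b_{n+1}-1}$, the bottom boundary term coming from $Te_{b_1-1}=-e_0$ cancels as well, and only $e_{b_n}$ survives. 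The same telescoping, read in the reverse direction, settles $STe_{b_{n+1}-1}=e_{b_{n+1}-1}$ after expressing $Se_{b_{\varphi(n)}}$ through the shift identities $m_{\varphi(n)}=m_n-1$ and $\varphi^l(\varphi(n))=\varphi^{l+1}(n)$.

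Finally, since $TS$ and $ST$ agree with the identity on every $e_k$, they agree with the identity on $c_{00}$ by linearity and hence on all of $\ell^1(\N)$ by continuity, so $T$ is invertible with $T^{-1}=S$. I expect the bookkeeping in the telescoping identity for $k=b_n$ to be the main obstacle: one has to track simultaneously the products $P_m$, the shifting of the index along the $\varphi$-chain, and the two boundary terms, and to treat the degenerate case $m_n=1$ (that is, $\varphi(n)=0$) separately, where the single surviving term is $-v_ne_{b_1-1}$ and the cancellation uses $e_0=e_{b_0}=e_{b_{\varphi(n)}}$.
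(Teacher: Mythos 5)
Your proposal is correct, but it takes a genuinely different route from the paper's proof. The paper argues in three stages: it first proves injectivity of $T_{v,w,\varphi,b,R}$ by contradiction, iterating the sets $\mathcal{N}_k=\varphi^{-1}(\mathcal{N}_{k-1})$ and using the hypothesis $\lim_{N\to\infty}\sup_{n\in\varphi^{-1}(N)}|v_n|=0$; it then exhibits preimages of the basis vectors via the same telescoping computation you describe; it then proves surjectivity by showing that $\bigl(T_{v,w,\varphi,b,R}^{-1}P_{[0,b_{n+1})}z\bigr)_n$ is Cauchy, which is where the hypothesis $\sup_{l\ge 1}\sum_{m=0}^{m_l-1}\prod_{s=0}^{m}|v_{\varphi^s(l)}|<\infty$ enters; and it finally invokes the open mapping theorem to get continuity of the inverse. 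You instead construct the inverse directly: define $S$ by the displayed formula, bound $\|S\|=\sup_k\|Se_k\|_1$ using that same second hypothesis, verify $TS=ST=I$ on the basis, and extend by linearity and density. This buys two things. First, you never need the open mapping theorem nor a separate injectivity argument, since both injectivity and continuity of the inverse come for free once a bounded two-sided inverse is exhibited. Second, and worth noting, your argument never uses the first hypothesis at all, so it actually shows that the summability condition alone (together with the standing boundedness assumptions on $v$ and $w$) suffices for invertibility; in the paper the first hypothesis is consumed entirely by the injectivity step, which your approach makes redundant. The price is that you must check both composition identities rather than only $TS=I$: the extra case is $STe_{b_{n+1}-1}=e_{b_{n+1}-1}$, which you correctly reduce to the reverse telescoping via the shift identities $m_{\varphi(n)}=m_n-1$ and $\varphi^{l}(\varphi(n))=\varphi^{l+1}(n)$, with the degenerate case $\varphi(n)=0$ handled separately; I verified that this bookkeeping closes as you claim.
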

\begin{proof}
We first prove that $T_{v,w,\varphi,b,R}$ is injective. Let $x\in \ell^1(\mathbb{N})$ such that $T_{v,w,\varphi,b,R} x=0$ and $n\ge 0$. It follows that $x_k=0$ for every $k\in [b_{n},b_{n+1}-1)$ and that \[-x_{b_{n+1}-1}+\sum_{m \in \varphi^{-1}(n)\backslash\{0\}}v_m x_{b_{m+1}-1}=0.\]

Assume that there exists $n_0$ such that $|x_{b_{n_0+1}-1}|>\varepsilon>0$. Then we deduce that 
\[\sum_{m \in \varphi^{-1}(n_0)\backslash\{0\}}|v_m| |x_{b_{m+1}-1}|>\varepsilon\]
and thus that 
\[\sum_{m \in \mathcal{N}_1}|x_{b_{m+1}-1}|>\frac{\varepsilon}{S_1}\]
where $\mathcal{N}_1=\varphi^{-1}(n_0)\backslash\{0\}$ and $S_1=\sup_{m\in\mathcal{N}_1}|v_m|$.
By looking at $\sum_{n \in \mathcal{N}_1}x_{b_{n+1}-1}$, we deduce in the same way that 
\[\sum_{n \in \mathcal{N}_1}\sum_{m\in \varphi^{-1}(n)} v_m x_{b_{m+1}-1}=\sum_{n \in \mathcal{N}_1}x_{b_{n+1}-1}\quad\text{and thus that }\quad \sum_{m \in \mathcal{N}_2}|x_{b_{m+1}-1}|>\frac{\varepsilon}{S_1S_2}\]
where $\mathcal{N}_2=\varphi^{-1}(\mathcal{N}_1)$ and $S_2=\sup_{m\in\mathcal{N}_2}|v_m|$. By repeating this argument, we get for every $k\ge 2$,
\[\sum_{m \in \mathcal{N}_k}|x_{b_{m+1}-1}|>\frac{\varepsilon}{\prod_{l=1}^kS_l}\]
where $\mathcal{N}_k=\varphi^{-1}(\mathcal{N}_{k-1})$ and $S_k=\sup_{m\in\mathcal{N}_{k}}|v_m|$. Therefore, since $\inf \mathcal{N}_k\ge k$ for every $k\ge 1$, we have by assumption $\lim_k S_k=0$ and this is then impossible than $x$ belongs to $\ell^1(\mathbb{N})$.\\

The operator $T_{v,w,\varphi,b,R}$ is thus injective and we can compute that
\[
T_{v,w,\varphi,b,R}^{-1}\ e_k=
\begin{cases}
 \frac{1}{w_{k}}\, e_{k-1} & \textrm{if}\ k\in (b_{n},b_{n+1}),\; n\geq 0,\\
-\sum_{m=0}^{m_n-1}\Big(\prod_{l=0}^mv_{\varphi^l(n)}\Big) e_{b_{\varphi^{m+1}(n)+1}-1}- e_{b_{n+1}-1} & \textrm{if}\ k=b_{n},\ n\ge 1,\\
 -e_{b_1-1}& \textrm{if}\ k=0.
\end{cases}
\]
We only show that for every $n\ge 1$, \[T_{v,w,\varphi,b,R}\left(-\sum_{m=0}^{m_n-1}\Big(\prod_{l=0}^mv_{\varphi^l(n)}\Big) e_{b_{\varphi^{m+1}(n)+1}-1}- e_{b_{n+1}-1}\right)=e_{b_n}.\]
Indeed, we have
\begin{align*}
&T_{v,w,\varphi,b,R}\left(-\sum_{m=0}^{m_n-1}\Big(\prod_{l=0}^mv_{\varphi^l(n)}\Big) e_{b_{\varphi^{m+1}(n)+1}-1}- e_{b_{n+1}-1}\right)\\
&\quad=\Big(\prod_{l=0}^{m_n-1}v_{\varphi^l(n)}\Big)e_{0}-\sum_{m= 0}^{m_n-2}\Big(\prod_{l=0}^mv_{\varphi^l(n)}\Big)\left(v_{\varphi^{m+1}(n)}e_{b_{\varphi^{m+2}(n)}}-e_{b_{\varphi^{m+1}(n)}}\right)\\
&\quad\quad-\left(v_{n}e_{b_{\varphi(n)}}-e_{b_n}\right)\\
&\quad=-\sum_{m= 0}^{m_n-2}\Big(\prod_{l=0}^{m+1}v_{\varphi^l(n)}\Big)e_{b_{\varphi^{m+2}(n)}}+\sum_{m= 0}^{m_n-1} \Big(\prod_{l=0}^{m}v_{\varphi^l(n)}\Big)e_{b_{\varphi^{m+1}(n)}}-v_{n}e_{b_{\varphi(n)}}+e_{b_n}\\
&\quad=e_{b_n}.
\end{align*}

We now show that $T_{v,w,\varphi,b,R}$ is surjective. Let $z\in X$. It suffices to show that the sequence $(T_{v,w,\varphi,b,R}^{-1}P_{[0,b_{n+1})}z)_n$ is a Cauchy sequence, where $P_{[0,j)}z=\sum_{k=0}^{j-1}z_ke_k$.

Let $N> n$. We have
\begin{align*}
&\|T_{v,w,\varphi,b,R}^{-1}P_{[0,b_{N+1})}z-T_{v,w,\varphi,b,R}^{-1}P_{[0,b_{n+1})}z\|\\
&\quad=\Big\|\sum_{k=b_{n+1}}^{b_{N+1}-1}z_kT_{v,w,\varphi,b,R}^{-1}e_k\Big\|\\
&\quad\le \sum_{l=n+1}^N\sum_{k=b_l+1}^{b_{l+1}-1}\frac{1}{|w_{k}|}|z_k|+\sum_{l=n+1}^N  \left(
\sum_{m=0}^{m_l-1} \prod_{s=0}^m|v_{\varphi^s(l)}|\right)|z_{b_l}|+ \sum_{l=n+1}^N |z_{b_l}|\\
&\quad\le  \Big(\frac{1}{\inf_k |w_k|}+\sup_{l\ge 1}\Big(\sum_{m=0}^{m_l-1}\prod_{s=0}^{m}|v_{\varphi^s(l)}|\Big)+1\Big)\|P_{[0,b_{N+1})}z-P_{[0,b_{n+1})}z\|.
\end{align*}
We conclude that the sequence $(T_{v,w,\varphi,b,R}^{-1}P_{[0,b_{n+1})}z)_n$ is a Cauchy sequence since $\Big(\frac{1}{\inf_k |w_k|}+\sup_{l\ge 1}\Big(\sum_{m=0}^{m_l-1}\prod_{s=0}^{m}|v_{\varphi^s(l)}|\Big)<\infty$ and thus that $T_{v,w,\varphi,b,R}$ is surjective. Finally, it follows from the open mapping theorem that the inverse of $T_{v,w,\varphi,b,R}$ is continuous. 
\end{proof}

We can remark that an operator $T_{v,w,\varphi,b,R}$ with $R_n=1$ is therefore invertible if the sequence $(\sup_{n\in \varphi^{-1}(N)}|v_n|)_N$ decreases sufficiently rapidly.

\begin{cor}\label{cor inv}
Assume that $R_n=1$ for every $n\ge 0$. If $\sup_{n\in \varphi^{-1}(m)}|v_n|\le\frac{1}{2^m}$ for every $m\ge 0$, then $T_{v,w,\varphi,b,R}$ is invertible.
\end{cor}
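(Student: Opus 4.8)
The plan is to obtain this as a direct consequence of Proposition~\ref{invertible}: I will verify that the single hypothesis $\sup_{n\in\varphi^{-1}(m)}|v_n|\le 2^{-m}$ forces both conditions required there. The first condition, $\lim_{N\to\infty}\sup_{n\in\varphi^{-1}(N)}|v_n|=0$, is immediate, since the assumption gives $\sup_{n\in\varphi^{-1}(N)}|v_n|\le 2^{-N}\to 0$. Everything therefore reduces to establishing the uniform bound $\sup_{l\ge 1}\Big(\sum_{m=0}^{m_l-1}\prod_{s=0}^m|v_{\varphi^s(l)}|\Big)<\infty$.

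For the second condition I would first observe that each factor decays geometrically in the \emph{next} iterate. Indeed, since $\varphi^s(l)\in\varphi^{-1}(\varphi^{s+1}(l))$, the hypothesis yields
\[
|v_{\varphi^s(l)}|\le \sup_{n\in\varphi^{-1}(\varphi^{s+1}(l))}|v_n|\le 2^{-\varphi^{s+1}(l)},
\]
and hence $\prod_{s=0}^m|v_{\varphi^s(l)}|\le 2^{-\sum_{j=1}^{m+1}\varphi^j(l)}$. Next I would exploit that $\varphi$ strictly decreases: the iterates satisfy $\varphi^1(l)>\varphi^2(l)>\cdots>\varphi^{m+1}(l)\ge 0$, so they form $m+1$ pairwise distinct non-negative integers. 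Any such collection sums to at least $0+1+\cdots+m=\tfrac{m(m+1)}{2}$, whence $\sum_{j=1}^{m+1}\varphi^j(l)\ge \tfrac{m(m+1)}{2}$ and therefore
\[
\prod_{s=0}^m|v_{\varphi^s(l)}|\le 2^{-m(m+1)/2}.
\]

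This estimate is independent of $l$, so it can be summed termwise: for every $l\ge 1$,
\[
\sum_{m=0}^{m_l-1}\prod_{s=0}^m|v_{\varphi^s(l)}|\le \sum_{m=0}^{\infty}2^{-m(m+1)/2}=:C<\infty,
\]
which is exactly the second condition of Proposition~\ref{invertible}. Since $R_n=1$ for every $n$ by assumption, applying that proposition then gives the invertibility of $T_{v,w,\varphi,b,R}$. The only point requiring care is the combinatorial estimate: one must invoke the distinctness of the iterates $\varphi^j(l)$—guaranteed by the defining property $\varphi(n)<n$—rather than any bound on a single exponent, because the last exponent $\varphi^{m+1}(l)$ may vanish. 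It is precisely the distinctness that forces the remaining exponents to grow like a triangular number and thereby produces the super-geometric, $l$-uniform decay that makes the supremum finite.
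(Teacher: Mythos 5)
Your proof is correct, and it follows the paper's overall strategy — reduce to Proposition~\ref{invertible}, with the first hypothesis disposed of identically via $\sup_{n\in\varphi^{-1}(N)}|v_n|\le 2^{-N}\to 0$ — but the key summability estimate is obtained by a genuinely different counting argument. You keep the entire product, rewrite it as $2^{-\sum_{j=1}^{m+1}\varphi^j(l)}$, and use the pairwise distinctness of the strictly decreasing iterates $\varphi^1(l)>\cdots>\varphi^{m+1}(l)\ge 0$ to get the $l$-independent, super-geometric bound $2^{-m(m+1)/2}$, which sums to a universal constant. The paper instead discards all but the last factor, $\prod_{s=0}^m|v_{\varphi^s(l)}|\le |v_{\varphi^m(l)}|\le 2^{-\varphi^{m+1}(l)}$, and measures the position of that iterate from the \emph{bottom} of the chain: since $\varphi^{m_l-m}(l)\ge m$ for every $0\le m\le m_l$, one has $\varphi^{m+1}(l)\ge m_l-m-1$, so the sum is dominated by the geometric series $\sum_{m=0}^{m_l-1}2^{-(m_l-m-1)}\le 2$. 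Both arguments exploit the same structural fact — that $\varphi$ strictly decreases along orbits, so the iterates of $l$ form a strictly decreasing chain terminating at $0$ — but your bound is term-wise uniform in $l$ and decays super-geometrically in $m$ (making visible that the hypothesis $|v_n|\le 2^{-\varphi(n)}$ has room to spare), whereas the paper's individual terms depend on $m_l$ and only the telescoped sum is uniformly controlled; the paper's version is marginally leaner and yields the explicit constant $2$.
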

\begin{proof}
We have \[\sup_{n\in \varphi^{-1}(N)}|v_n|\le \frac{1}{2^N}\xrightarrow[N\to \infty]{} 0\] and since for every $l\ge 1$, every $0\le m\le m_l$, we have $\varphi^{m_l-m}(l)\ge m$, it follows that
\[\sum_{m=0}^{m_l-1}\prod_{s=0}^{m}|v_{\varphi^s(l)}|\le \sum_{m=0}^{m_l-1}\frac{1}{2^{\varphi^{m+1}(l)}}\le \sum_{m= 0}^{m_l-1}\frac{1}{2^{m_l-m-1}}\le 2.\]
We can then deduce from Proposition~\ref{invertible} that $T_{v,w,\varphi,b,R}$ is invertible.
\end{proof}

\section{A frequently hypercyclic operator whose inverse is not frequently hypercyclic}

Let $n_0=0$ and $n_k=2^{k-1}$ for every $k\ge 1$. We will consider an operator $\T=T_{v,w,\varphi,b,R}$ with the following parameters:
\begin{itemize}
\item for every $n\in [n_k,n_{k+1})$, $\varphi(n)=n-n_k$;
\item for every $m$, every $n\in \varphi^{-1}(m)$, $v_n=2^{-\tau_m}$;
\item for every $k\ge 0$, for every $n\in [n_k,n_{k+1})$, every $i\in (b_n,b_{n+1})$,
\[
w_i=
\begin{cases}
  \frac{1}{2} & \quad\text{if}\ \ b_n< i\le b_n+\eta_n\\
 1 & \quad\text{if}\ \ b_n+\eta_n< i< b_{n+1}-2\delta_n\\
 \frac{1}{2} & \quad\text{if}\ \  b_{n+1}-2\delta_n\le i <b_{n+1}-\delta_n\\
 2& \quad\text{if}\ \ b_{n+1}-\delta_n\le i<b_{n+1}\\
\end{cases}
\]
\item for every $n\ge 0$, $R_n=1$;
\end{itemize}
where $(\tau_m)_{m\ge 0}$ is an increasing sequence of positive integers and for every $k\ge 0$, for every $n\in [n_k,n_{k+1})$,
\[
 \delta_n=\delta^{(k)},\quad \eta_n=\eta^{(k)}\quad \text{and}\quad b_{n+1}-b_n=\Delta^{(k)},\]
where $(\delta^{(k)})_{k\ge 0}$, $(\eta^{(k)})_{k\ge 0}$ and $(\Delta^{(k)})_{k\ge 0}$ are three 
increasing sequences of positive integers satisfying for every $k\ge 0$,
\[2\delta ^{(k)}+\eta^{(k)}<\Delta^{(k)},\quad \text{$\Delta^{(k+1)}$ is a multiple of $2\Delta^{(k)}$} \quad
\text{and}\quad\frac{\eta^{(k)}}{\Delta^{(k)}}=\frac{\eta^{(0)}}{\Delta^{(0)}}.\]

From now, we will denote by $\T$ this operator which depends on the four parameters $(\tau_m)_{m\ge 0}$, $(\delta^{(k)})_{k\ge 0}$, $(\eta^{(k)})_{k\ge 0}$ and $(\Delta^{(k)})_{k\ge 0}$, and we will show that under convenient conditions on these parameters, $\T$ is an invertible frequently hypercyclic operator on $\ell^1(\mathbb{N})$ whose inverse is not frequently hypercyclic. 

Observe already that since the sequence $(\tau_m)_m$ is increasing, it follows from Corollary~\ref{cor inv} that $\T$ is an invertible operator. Moreover, each finite sequence is an eigenvector of $\T^n$ for some $n$.

\begin{prop}\label{propeigen}
For every $n\ge 0$, every $x\in \text{\emph{span}}{\{e_k:k<b_{n+1}\}}$,
\[\T^{2(b_{n+1}-b_n)}x=2^{-2\eta_n}x.\]
\end{prop}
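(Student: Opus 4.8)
The plan is to deduce everything from Lemma~\ref{Lem} together with the divisibility structure of the parameters. Since $R_n=1$ for every $n$, the hypothesis of Lemma~\ref{Lem} reduces to the single requirement $W_n=W_{\varphi(n)}^{(b_{n+1}-b_n)/(b_{\varphi(n)+1}-b_{\varphi(n)})}$, and its conclusion becomes $\T^{2(b_{m+1}-b_m)}e_k=W_m^2\,e_k$ for every $k\in[b_m,b_{m+1})$. So the first task is to compute $W_n$, the second is to check this hypothesis, and the third is to upgrade the block-wise statement (exponent $2(b_{m+1}-b_m)$, eigenvalue $W_m^2$) into the uniform statement of the proposition (exponent $2(b_{n+1}-b_n)$, eigenvalue $2^{-2\eta_n}$ for \emph{every} $k<b_{n+1}$).

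First I would compute $W_n=\prod_{b_n<j<b_{n+1}}w_j$ directly from the piecewise definition of $w$. For $n\in[n_k,n_{k+1})$ the factors equal to $1$ contribute nothing, the block of length $\eta_n$ on which $w_i=\tfrac12$ contributes $2^{-\eta_n}$, the block of length $\delta_n$ on which $w_i=\tfrac12$ contributes $2^{-\delta_n}$, and the block of length $\delta_n$ on which $w_i=2$ contributes $2^{\delta_n}$. The two $\delta_n$-contributions cancel, giving $W_n=2^{-\eta_n}$, hence $W_n^2=2^{-2\eta_n}$, which is already the candidate eigenvalue. This is a short counting computation, and I expect no difficulty here beyond checking (using $2\delta^{(k)}+\eta^{(k)}<\Delta^{(k)}$) that the four index ranges in the definition of $w$ exactly tile $(b_n,b_{n+1})$.

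Next I would verify the hypothesis of Lemma~\ref{Lem}. With $W_n=2^{-\eta_n}$, the required identity $W_n=W_{\varphi(n)}^{(b_{n+1}-b_n)/(b_{\varphi(n)+1}-b_{\varphi(n)})}$ is equivalent to $\eta_n/(b_{n+1}-b_n)=\eta_{\varphi(n)}/(b_{\varphi(n)+1}-b_{\varphi(n)})$. If $n\in[n_k,n_{k+1})$ and $\varphi(n)\in[n_j,n_{j+1})$, this reads $\eta^{(k)}/\Delta^{(k)}=\eta^{(j)}/\Delta^{(j)}$, which holds because the assumption $\eta^{(k)}/\Delta^{(k)}=\eta^{(0)}/\Delta^{(0)}$ makes the ratio constant in $k$. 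Lemma~\ref{Lem} then yields $\T^{2(b_{m+1}-b_m)}e_k=2^{-2\eta_m}e_k$ for every $m\ge0$ and every $k\in[b_m,b_{m+1})$.

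The main (and only genuinely delicate) step is the upgrade. Fix $n$ and $k<b_{n+1}$, and let $m\le n$ be the unique index with $k\in[b_m,b_{m+1})$. Since $\Delta^{(k+1)}$ is a multiple of $2\Delta^{(k)}$ and the blocks are increasing, $2(b_{n+1}-b_n)$ is a multiple of $2(b_{m+1}-b_m)$, say $2(b_{n+1}-b_n)=q\cdot2(b_{m+1}-b_m)$ with $q=(b_{n+1}-b_n)/(b_{m+1}-b_m)$ a positive integer. Applying the block-wise identity $q$ times gives $\T^{2(b_{n+1}-b_n)}e_k=(2^{-2\eta_m})^q e_k=2^{-2\eta_m q}e_k$, and the constant-ratio condition forces $\eta_m q=\eta_m(b_{n+1}-b_n)/(b_{m+1}-b_m)=\eta_n$, so the eigenvalue collapses to $2^{-2\eta_n}$ independently of which block $k$ lies in. Extending from basis vectors to arbitrary $x\in\operatorname{span}\{e_k:k<b_{n+1}\}$ by linearity then completes the proof. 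I expect the only real care to be needed here: matching the divisibility of the $\Delta^{(k)}$ with the constancy of $\eta^{(k)}/\Delta^{(k)}$ so that the eigenvalue comes out uniformly equal to $2^{-2\eta_n}$.
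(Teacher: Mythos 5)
Your proof is correct and follows essentially the same route as the paper's: compute $W_n=2^{-\eta_n}$, verify the hypothesis of Lemma~\ref{Lem} via the constancy of $\eta^{(k)}/\Delta^{(k)}$, and then upgrade the resulting block-wise identity to all of $\mathrm{span}\{e_k:k<b_{n+1}\}$ using the fact that each $\Delta^{(k+1)}$ is a multiple of $2\Delta^{(k)}$. The only cosmetic difference is that the paper carries out this upgrade in a single displayed computation on the decomposition of $x$, whereas you argue on individual basis vectors and then extend by linearity.
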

\begin{proof}
By definition of $(w_j)$, we have $W_n=2^{-\eta_n}$ for every $n\ge 0$. Therefore, for every $K\ge 1$, every $n\in [n_K,n_{K+1})$, if $\varphi(n)\in [n_k,n_{k+1})$, we get 
\[(R^{-1}_{\varphi_n}W_{\varphi(n)})^{\frac{b_{n+1}-b_n}{b_{\varphi(n)+1}-b_{\varphi(n)}}}=(2^{-\eta_{\varphi(n)}})^{\frac{b_{n+1}-b_n}{b_{\varphi(n)+1}-b_{\varphi(n)}}}=
(2^{-\eta^{(k)}})^{\frac{\Delta^{(K)}}{\Delta^{(k)}}}=2^{-\eta^{(K)}}=R^{-1}_nW_n\]
since $(\frac{\eta^{(k)}}{\Delta^{(k)}})_k$ is a constant sequence.

It follows from Lemma~\ref{Lem} that for $n\ge 0$, every $j\in [b_n,b_{n+1})$, 
\[\T^{2(b_{n+1}-b_n)}e_j=2^{-2\eta_n}e_j.\]
Finally, if $x=\sum_{m=0}^n\sum_{j=b_m}^{b_{m+1}-1}x_je_j$ and $n\in [n_K,n_{K+1})$, we have, by using the fact that $\Delta^{(k+1)}$ is a multiple of $2\Delta^{(k)}$ for every $k\ge 0$,
\begin{align*}
\T^{2(b_{n+1}-b_n)}x &=\sum_{k=0}^{K-1}\sum_{m=n_k}^{n_{k+1}-1}\sum_{j=b_m}^{b_{m+1}-1}(2^{-2\eta^{(k)}})^{\frac{2\Delta^{(K)}}{2\Delta^{(k)}}}x_je_j+\sum_{m=n_K}^{n}\sum_{j=b_m}^{b_{m+1}-1}2^{-2\eta^{(K)}}x_je_j\\
&=\sum_{k=0}^{K-1}\sum_{m=n_k}^{n_{k+1}-1}\sum_{j=b_m}^{b_{m+1}-1}2^{-2\eta^{(K)}}x_je_j+\sum_{m=n_K}^{n}\sum_{j=b_m}^{b_{m+1}-1}2^{-2\eta^{(K)}}x_je_j\\
&=2^{-2\eta^{(K)}}x=2^{-2\eta_n}x.
\end{align*}
\end{proof}

In order to show that $\T$ is in fact frequently hypercyclic without additional conditions on the parameters $(\tau_m)_{m\ge 0}$, $(\delta^{(k)})_{k\ge 0}$, $(\eta^{(k)})_{k\ge 0}$ and $(\Delta^{(k)})_{k\ge 0}$, we begin by stating the following technical lemma.

\begin{lemma}\label{lem tec}\text{}
\begin{enumerate}
\item For every $y\in c_{00}$, there exists $k_0$ such that for every $k\ge k_0$,
\[\|\T^ky\|\le 2^{-\frac{\eta^{(0)}}{3\Delta^{(0)}}k}.\]
\item For every $K_0\ge 0$, every $N\ge 1$, there exists $K\ge K_0$ such that for every $n\in [n_K,n_K+N)$, every $k\ge 0$, 
\[\|\T^{k} e_{b_n}\|\le 2^{-\frac{\eta^{(0)}}{3\Delta^{(0)}}k}.\]
\end{enumerate}
\end{lemma}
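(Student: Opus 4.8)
The engine behind both parts is Proposition~\ref{propeigen}: on the span of $\{e_k:k<b_{n+1}\}$ the operator $\T^{2(b_{n+1}-b_n)}$ is the scalar $2^{-2\eta_n}$. Writing $\Delta_n=b_{n+1}-b_n$ (which depends only on the level $k$ with $n\in[n_k,n_{k+1})$), this means that over one period of length $2\Delta_n$ the norm of such a vector is multiplied exactly by $2^{-2\eta_n}$, so the per-step contraction rate is $2^{-\eta_n/\Delta_n}=2^{-\eta^{(0)}/\Delta^{(0)}}$, independent of $n$ and strictly faster than the target rate $2^{-\eta^{(0)}/(3\Delta^{(0)})}$. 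The extra factor $3$, together with the freedom in (2) to take $K$ large, is precisely the slack needed to swallow the transient behaviour inside a single period. Throughout I will use $\|\T\|\le 2$, which holds because every column $\T e_k$ has $\ell^1$-norm at most $2$ (as $|w_j|\le 2$ and $|v_n|\le 1$).

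For (1), any $y\in c_{00}$ lies in $\operatorname{span}\{e_k:k<b_{N+1}\}$ for some $N$, so $\T^{2\Delta_N}y=2^{-2\eta_N}y$. I would write $k=2\Delta_N q+r$ with $0\le r<2\Delta_N$ and estimate
\[\|\T^k y\|=2^{-2\eta_N q}\,\|\T^r y\|\le 2^{-2\eta_N q}\,2^{2\Delta_N}\|y\|.\]
Since $q\ge k/(2\Delta_N)-1$ and $\eta_N/\Delta_N=\eta^{(0)}/\Delta^{(0)}$, the right-hand side is bounded by $C\,2^{-\frac{\eta^{(0)}}{\Delta^{(0)}}k}$ for a constant $C=C(y)$; because $\frac{\eta^{(0)}}{\Delta^{(0)}}>\frac{\eta^{(0)}}{3\Delta^{(0)}}$, this is $\le 2^{-\frac{\eta^{(0)}}{3\Delta^{(0)}}k}$ once $k\ge k_0$.

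For (2) the bound must hold for every $k$, so first I would establish the uniform estimate $\|\T^s e_{b_m}\|\le 1$ for all $m,s\ge 0$ by induction on $m$: the block $m=0$ branches to nothing, and for $m\ge 1$ the identity $\T^{\Delta_m}e_{b_m}=W_m v_m e_{b_{\varphi(m)}}-W_m e_{b_m}$ together with periodicity reduces $s$ to $[0,2\Delta_m)$, where $\|\T^{\Delta_m+t}e_{b_m}\|\le 2^{-\eta_m}\bigl(|v_m|\,\|\T^t e_{b_{\varphi(m)}}\|+P_m(t)\bigr)\le 2^{1-\eta_m}\le 1$, using $|W_m|=2^{-\eta_m}$, $|v_m|\le 1$, the inductive hypothesis at $\varphi(m)<m$, and $P_m(t):=\prod_{b_m<j\le b_m+t}|w_j|\le 1$. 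Then I would choose $K\ge K_0$ large enough that $N\le n_{K+1}-n_K$ and $\eta^{(K)}\ge 3$ (possible since $\Delta^{(k)}$, hence $\eta^{(k)}=\tfrac{\eta^{(0)}}{\Delta^{(0)}}\Delta^{(k)}$, tends to $\infty$). Every $n\in[n_K,n_K+N)$ then lies in $[n_K,n_{K+1})$, so $\T^{2\Delta_n}e_{b_n}=2^{-2\eta_n}e_{b_n}$ and, exactly as in (1), it suffices to check $\|\T^r e_{b_n}\|\le 2^{-\frac{\eta^{(0)}}{3\Delta^{(0)}}r}$ for $0\le r<2\Delta_n$. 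For $0\le r<\Delta_n$ one has $\|\T^r e_{b_n}\|=P_n(r)$, and the explicit weight pattern gives $P_n(r)=2^{-r}$ for $r\le\eta_n$ and $P_n(r)\le 2^{-\eta_n}$ for $\eta_n\le r<\Delta_n$; since $\frac{\eta^{(0)}}{\Delta^{(0)}}<1$ and $\eta_n=\frac{\eta^{(0)}}{\Delta^{(0)}}\Delta_n$, both are $\le 2^{-\frac{\eta^{(0)}}{3\Delta^{(0)}}r}$. For $\Delta_n\le r<2\Delta_n$, writing $r=\Delta_n+s$ and combining the branching identity with the $\le 1$ bound yields $\|\T^r e_{b_n}\|\le 2^{-\eta_n}(|v_n|+P_n(s))\le 2^{1-\eta_n}\le 2^{-2\eta_n/3}\le 2^{-\frac{\eta^{(0)}}{3\Delta^{(0)}}r}$, where the last steps use $\eta_n\ge 3$ and $\frac{\eta^{(0)}}{\Delta^{(0)}}r<2\eta_n$.

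The main obstacle is this second phase of (2): the branched contribution $2^{-\eta_n}|v_n|\,\|\T^s e_{b_{\varphi(n)}}\|$ must be controlled uniformly in $s$, even though $\|\T^s e_{b_{\varphi(n)}}\|$ itself need not decay. This is exactly what forces $K$ to be large, so that the branching coefficient $|W_n|=2^{-\eta^{(K)}}$ becomes negligible and the crude estimate $\|\T^s e_{b_{\varphi(n)}}\|\le 1$ suffices; the explicit computation of the partial products $P_n(r)$ in phase one is routine once the weight pattern is read off.
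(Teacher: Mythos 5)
Your proof is correct, and its overall architecture is the same as the paper's: both parts rest on Proposition~\ref{propeigen} to reduce an arbitrary power to the first one or two periods (part (1) of your argument is essentially identical to the paper's), and in part (2) both split into the same three phases: the weight pattern for $k<\Delta^{(K)}$, the branching identity $\T^{\Delta^{(K)}}e_{b_n}=W_nv_ne_{b_{\varphi(n)}}-W_ne_{b_n}$ for $\Delta^{(K)}\le k<2\Delta^{(K)}$, and the eigenvalue reduction for $k\ge 2\Delta^{(K)}$. The one genuine difference is how the branched term $\|\T^{k-\Delta^{(K)}}e_{b_{\varphi(n)}}\|$ is tamed. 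The paper sets $C=\sup_{m\le N}\sup_{j<2(b_{m+1}-b_m)}\|\T^j e_{b_m}\|$, notes this is a finite supremum, and chooses $K$ so that $2^{\eta^{(K)}/3}>C+1$; no uniform bound on the transient behaviour is ever proved there. You instead prove, by induction on the block index $m$ (using $\varphi(m)<m$, $|W_m|=2^{-\eta_m}$, $|v_m|\le 1$ and the partial products $P_m(t)\le 1$), the global estimate $\|\T^s e_{b_m}\|\le 1$ for all $m,s\ge 0$; this shows that the paper's constant $C$ is in fact at most $1$, and it lets you pick $K$ through the explicit conditions $n_{K+1}-n_K\ge N$ and $\eta^{(K)}\ge 3$. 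The paper's route is slightly shorter, since finiteness of $C$ comes for free and $\eta^{(K)}\to\infty$ absorbs any fixed constant; yours buys a stronger intermediate statement (uniform boundedness of the orbits of the vectors $e_{b_m}$) and a choice of $K$ that does not depend on an unspecified constant. Both arguments are complete.
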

\begin{proof}\text{}
\begin{enumerate}
\item Let $K\ge 0$, $y=\sum_{l=0}^{b_{n_{K+1}}-1}y_le_l$ and $C=\sup_{k<2\Delta^{(K)}}\|\T^k\|$. If we consider a positive integer $L$ and $k\in [2L\Delta^{(K)},2(L+1)\Delta^{(K)})$ then by Proposition~\ref{propeigen},
\begin{align*}
\|\T^ky\|&\le \|\T^{k-2L\Delta^{(K)}}\|\|\T^{2L\Delta^{(K)}}y\|\le C 2^{-2L\eta^{(K)}}\|y\|\\
&\le C 2^{-\frac{k-2\Delta^{(K)}}{\Delta^{(K)}}\eta^{(K)}}\|y\|\le C 2^{-\frac{\eta^{(0)}}{\Delta^{(0)}}k+2\eta^{(K)}}\|y\|
\end{align*}
since $\frac{\eta^{(K)}}{\Delta^{(K)}}=\frac{\eta^{(0)}}{\Delta^{(0)}}$. We can then deduce that there exists $k_0$ such that for every $k\ge k_0$,
\[\|\T^ky\|\le 2^{-\frac{\eta^{(0)}}{3\Delta^{(0)}}k}\]
because 
\[\frac{C 2^{-\frac{\eta^{(0)}}{\Delta^{(0)}}k+2\eta^{(K)}}\|y\|}{2^{-\frac{\eta^{(0)}}{3\Delta^{(0)}}k}}=C2^{-\frac{2\eta^{(0)}}{3\Delta^{(0)}}k+2\eta^{(K)}}\|y\|\xrightarrow[k\to \infty]{} 0.\]
\item Let $K_0\ge 0$, $N\ge 1$ and $C=\sup_{m\le N}\sup_{j\in [0,2(b_{m+1}-b_m))}\|\T^j e_{b_m}\|$. We consider $K\ge K_0$ such that $n_{K+1}-n_K>N$ and  $2^{\frac{\eta^{(K)}}{3}}>C+1$. Let $n\in [n_K,n_{K}+N)$ and $k\ge 0$. If $k< \Delta^{(K)}$ then by definition of $(w_i)$,
\[\|\T^k e_{b_n}\|\le 2^{-\min\{k,\eta^{(K)}\}}\le 2^{-\frac{\eta^{(0)}}{3\Delta^{(0)}}k}\]
since $\frac{\eta^{(0)}}{3\Delta^{(0)}}\le 1$ and $\frac{\eta^{(0)}}{3\Delta^{(0)}}\Delta^{(K)}=\frac{\eta^{(K)}}{3}$.
On the other hand, if $k\in [\Delta^{(K)},2\Delta^{(K)})$, since $\varphi(n)=n-n_K\le N$ and since $\T^{2(b_{m+1}-b_m)}e_{b_m}=e^{-2\eta_m}e_{b_m}$ for every $m\ge 0$, we have, by definition of $\T$,
\begin{align*}
\|\T^k e_{b_n}\|&\le 2^{-\eta^{(K)}}\|\T^{k-\Delta^{(K)}}e_{b_n}\|+|v_n|2^{-\eta^{(K)}}\|\T^{k-\Delta^{(K)}}e_{b_{\varphi(n)}}\|\\
&\le 2^{-\eta^{(K)}}2^{-\frac{\eta^{(0)}}{3\Delta^{(0)}}(k-\Delta^{(K)})}+C 2^{-\eta^{(K)}}\\
&\le (C+1)2^{-\eta^{(K)}} \le 2^{-\frac{2\eta^{(K)}}{3}}\le 2^{-\frac{2\eta^{(0)}}{3\Delta^{(0)}}\Delta^{(K)}}\le 2^{-\frac{\eta^{(0)}}{3\Delta^{(0)}}k}.
\end{align*}
Finally, for every $L\ge 1$, every $k\in [2L\Delta^{(K)},2(L+1)\Delta^{(K)})$, we get
\begin{align*}
\|\T^k e_{b_n}\|&=2^{-2L\eta^{(K)}} \|\T^{k-2L\Delta^{(K)}}e_{b_n}\|\\
&\le 2^{-2L\eta^{(K)}}2^{-\frac{\eta^{(0)}}{3\Delta^{(0)}}(k-2L\Delta^{(K)})}\\
&\le 2^{-2L\Delta^{(K)}\frac{\eta^{(0)}}{\Delta^{(0)}}}2^{-\frac{\eta^{(0)}}{3\Delta^{(0)}}(k-2L\Delta^{(K)})}\le 2^{-\frac{\eta^{(0)}}{3\Delta^{(0)}}k}.
\end{align*}
\end{enumerate}
\end{proof}

We are now able to construct a frequently hypercyclic vector for $\T$.

\begin{prop}\label{fhc}
$\T$ is a frequently hypercyclic opertor on $\ell^1(\mathbb{N})$.
\end{prop}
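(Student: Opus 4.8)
The plan is to construct a frequently hypercyclic vector for $\T$ by hand, since, as explained above, the ready-made criterion of \cite[Theorem 5.31]{Monster} would transmit frequent hypercyclicity to $\T^{-1}$ and so cannot be used. Fix a sequence $(y_p)_{p\ge 1}$ in $c_{00}$ that is dense in $\ell^1(\N)$, with $y_p\in\text{span}\{e_k:k<b_{N_p+1}\}$. The mechanism for reaching $y_p$ is to let a vector supported high up in the basis travel down to the relevant low blocks: a single exit $\T e_{b_{n+1}-1}=v_n e_{b_{\varphi(n)}}-e_{b_n}$ carries only the small multiple $v_n$ of the mass down to level $\varphi(n)$, while the terminal zone of weights equal to $2$ (of length $\delta_n$) amplifies by $2^{\delta_n}$ a vector pushed through it, thereby compensating the smallness $|v_n|=2^{-\tau_{\varphi(n)}}$ as soon as $\delta^{(k)}\ge\tau_{\varphi(n)}$. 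Spreading such a carrier across a block and tuning its coefficients, one arranges that at a prescribed readout time its image reproduces $y_p$ on the blocks $b_0,\dots,b_{N_p}$; the synchronisation across the several levels below $N_p$ is possible because, at a fixed scale $k$, the blocks $n_k,n_k+1,\dots$ all have the same length $\Delta^{(k)}$ and $\varphi(n_k+m)=m$.

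Concretely, I would proceed as follows. Using the standard fact that $\N$ can be partitioned into pairwise disjoint sets $(E_p)_{p\ge 1}$ each of positive lower density, I assign to each target $y_p$ the readout times indexed by $E_p$. For each such slot I select, via Lemma~\ref{lem tec}(2), a scale $K$ (as large as required) along which the designated carrier blocks $n_K,\dots,n_K+N_p$ behave tamely, and I place in them a carrier $u$ of controlled norm with $\|\T^{t}u-y_p\|<\varepsilon$ at the scheduled time $t$. The frequently hypercyclic vector is then $x=\sum u$, the sum running over all targets and all their scheduled slots; the amplification by the $2$-zone keeps each $\|u\|$ small, so that $x\in\ell^1(\N)$. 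By construction the designated carrier makes $\T^{t}x$ close to $y_p$ for every $t\in E_p$.

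It remains to check that at such a time $t$ no other term spoils the approximation. Carriers attached to earlier slots live in lower or comparable blocks and their orbits have already decayed below $\varepsilon$ by Lemma~\ref{lem tec}(1); carriers attached to later slots live in much higher blocks (larger scales), are not yet activated, and contribute only a small norm because they were chosen small, the admissible scales being furnished by Lemma~\ref{lem tec}(2). The quasi-periodicity of Proposition~\ref{propeigen}, namely $\T^{2(b_{n+1}-b_n)}=2^{-2\eta_n}$ on $\text{span}\{e_k:k<b_{n+1}\}$, lets one compute exactly the contribution of each carrier over a full period and hence control the behaviour between readouts. Combining these estimates yields $\underline{\text{dens}}\,N_{\T}(x,U)\ge\underline{\text{dens}}(E_p)>0$ for every nonempty open $U$ containing some $y_p$ within distance $\varepsilon$, which is precisely frequent hypercyclicity.

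The hard part will be the error control of the previous paragraph, and more precisely the parasitic terms generated by the exit rule. Each time carrier mass leaves $b_{n+1}-1$, only the fraction $v_n$ descends to level $\varphi(n)$, while a companion term $-e_{b_n}$, of size $\sim 1/|v_n|=2^{\tau_{\varphi(n)}}$ relative to the descended part, is left in block $n$; one must guarantee that this companion is damped by the leading zone of weights $\tfrac{1}{2}$ (of length $\eta_n$, giving $W_n=2^{-\eta_n}$) before it can corrupt the readout, while simultaneously keeping $x$ in $\ell^1(\N)$ and preserving a positive lower density of good times. Balancing these competing requirements is what forces the precise growth relations between $(\tau_m)$, $(\eta^{(k)})$, $(\delta^{(k)})$ and $(\Delta^{(k)})$, and constitutes the technical heart of the proof.
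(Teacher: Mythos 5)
Your overall strategy is the same as the paper's: build the frequently hypercyclic vector by hand as a sum of carriers placed deep in the terminal weight-$2$ zones of high blocks, let the exit rule $\T e_{b_{n+1}-1}=v_ne_{b_{\varphi(n)}}-e_{b_n}$ bring mass down to the target blocks, use Proposition~\ref{propeigen} and Lemma~\ref{lem tec} to control everything else, and fight the companion term $-e_{b_n}$ with the leading weight-$\frac12$ zone. But there is a genuine gap at the combinatorial foundation: you take as readout times a \emph{partition} of $\N$ into pairwise disjoint sets $E_p$ of positive lower density. With such a choice the error control you describe is false, not merely unproved. In a partition (and more generally for disjoint sets with no separation condition), a slot $t\in E_p$ can have the slot $t-1$ (or $t+1$) belonging to some $E_q$ with $q\ne p$; the carrier designated for that slot is, by design, macroscopically large at its own readout time --- it is within $\varepsilon$ of $y_q$ at time $t-1$ --- so at time $t$ it contributes approximately $\T y_q$, whose norm is comparable to $\|y_q\|$, not less than $\varepsilon$. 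Equivalently, since $\T^{t}x=\T(\T^{t-1}x)$, simultaneous readouts at adjacent times would force $\|\T y_q-y_p\|\le \|\T\|\varepsilon_q+\varepsilon_p$, which fails for independently assigned dense targets. Lemma~\ref{lem tec}(1) only gives decay $2^{-\frac{\eta^{(0)}}{3\Delta^{(0)}}k}$ after sufficiently many steps $k$, so one needs distinct readout times to be quantitatively far apart. This is exactly why the paper uses the sets $A(s,l)$ of Bonilla and Grosse-Erdmann \cite[Lemma 2.5]{Boni}, which have positive lower density but are \emph{not} a partition: distinct $j\in A(s,l)$, $j'\in A(s',l')$ satisfy $|j-j'|\ge s+s'$ and $\min A(s,l)\ge l$, and, crucially, the parameters $s_j,l_j$ are chosen \emph{after} the carrier data ($N_j$, the $\tau_n$ with $n<n_{j+1}$, $\|y^{(j)}\|$) in conditions (2)--(3) of the paper's proof; the separation $|m-M|\ge s_j+s_J$ is what makes the sums over the cases $m>M$ and $m<M$ geometrically small. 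Without this ingredient your plan cannot be completed.

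A second, lesser point: the step you explicitly defer ("the technical heart") is where the paper does its real work, and the balance you propose is not the operative one. Killing the companion is not a matter of $\delta^{(k)}\ge\tau_{\varphi(n)}$; in the paper the constraint on $\delta$ is $\delta^{(k_m)}\ge m$, which only ensures the carrier fits inside the amplification zone at depth about $m$. The companion is defeated by a \emph{delayed-exit} device: the carrier is positioned so that the exit happens $2N_j\Delta^{(j)}$ steps \emph{before} the readout time; during those steps the companion travels inside the leading weight-$\frac12$ zone of the high block (this requires $\eta^{(k_m)}\ge (2N_j+1)\Delta^{(j)}$, condition (4)) and is damped by $2^{-2N_j\Delta^{(j)}}$, while the useful part already deposited on the low blocks only shrinks by the eigenvalue factor $2^{-2N_j\eta^{(j)}}$ of Proposition~\ref{propeigen} (which the carrier's coefficients pre-compensate). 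The relative gain $2^{-2N_j(\Delta^{(j)}-\eta^{(j)})}$ is then made to beat the factor $2^{\tau_n}$, $n<n_{j+1}$, coming from the $1/v_{n_{k_m}+n}$ in the carrier's coefficients: that is precisely condition (1) defining $N_j$. Without this scheduling idea, at the moment of exit the companion is $2^{\tau_{\varphi(n)}}$ times larger than the useful part, and no inequality among the global parameters $(\tau_m)$, $(\eta^{(k)})$, $(\delta^{(k)})$, $(\Delta^{(k)})$ alone will save the readout.
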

\begin{proof}
Let $(y^{(j)})_{j\ge 1}$ be a dense sequence in $\ell^{1}(\mathbb{N})$ with $\text{deg}(y^{(j)})<b_{n_{j+1}}$. Let $A(s,l)$ be sets of positive lower density such that for every $j\in A(s,l)$, every $j'\in A(s',l')$, if $j\ne j'$ then $|j-j'|\ge s +s'$ and $\min A(s,l)\ge l$. The construction of such sets can be found in \cite[Lemma 2.5]{Boni}.

We let $x=\sum_{j\ge 1}\sum_{m\in A(s_j,l_j)}x^{(m)}$ where
\[x^{(m)}= \sum_{n=0}^{n_{j+1}-1}\sum_{i=b_n}^{b_{n+1}-1}y^{(j)}_i \frac{2^{-\big(m-i+b_n-2N_j\Delta^{(j)}-2N_j\eta^{(j)}-1\big)}}{v_{n_{k_m}+n} \left(\prod_{t=b_n+1}^{i}w_t\right)}e_{b_{n_{k_m}+n+1}-(m-i+b_n-2N_j\Delta^{(j)})}\]
and where for every $j\ge 1$, every $m\in A(s_j,l_j)$,
\begin{enumerate}
\item $N_j$ is sufficiently big so that
\[\|y^{(j)}\|\frac{2^{-\big(2N_j\Delta^{(j)}-(2N_j+1)\eta^{(j)}\big)}}{\inf\{2^{-\tau_n}:n<n_{j+1}\}}\le \frac{1}{2^j}.\]
\item $s_j$ is sufficiently big so that $s_j> (2N_j+1)\Delta^{(j)}$,
\[\|y^{(j)}\| \frac{2^{-\big(s_j-(2N_j+1)\Delta^{(j)}-(2N_j+1)\eta^{(j)}-1\big)}}{\inf\{2^{-\tau_n}:n<n_{j+1}\}}\le \frac{1}{2^j}\]
and 
\[\|y^{(j)}\|\frac{2^{-\frac{\eta^{(0)}}{3\Delta^{(0)}}s_j+(2N_j+1)\eta^{(j)}+1}}{\inf\{2^{-\tau_n}:n<n_{j+1}\}}\le \frac{1}{2^j},\]
and so that for every $r<n_{j+1}$ and every $n\ge s_j-(2N_j+1)\Delta^{(j)}$,
\[\|\T^n e_{b_{r}}\|\le 2^{-\frac{\eta^{(0)}}{3\Delta^{(0)}}n}.\]
This last condition can be guaranteed thanks to Lemma~\ref{lem tec}.
\item $l_j$ is sufficiently big so that $l_j> (2N_j+1)\Delta^{(j)}$ and
\[\|y^{(j)}\| \frac{2^{-\big(l_j-(2N_j+1)\Delta^{(j)}-(2N_j+1)\eta^{(j)}-2\big)}}{\inf\{2^{-\tau_n}:n<n_{j+1}\}}\le \frac{1}{2^j}.\]
\item $k_m$ is sufficiently big so that $n_{k_m}+n_{j+1}<n_{k_m+1}$, $\eta^{(k_m)}\ge (2N_j+1)\Delta^{(j)}$ and  $\delta^{(k_m)}\ge m$,
and so that for every $N\in [n_{k_m},n_{k_m}+n_{j+1})$, every $n\ge 0$,
\[\|\T^n e_{b_{N}}\|\le 2^{-\frac{\eta^{(0)}}{3\Delta^{(0)}}n},\]
which can be guaranteed thanks to Lemma~\ref{lem tec}.
\end{enumerate}
The rest of the proof consists in showing that $x$ belongs to $\ell^1(\mathbb{N})$ and that for every $m\in A(s_j,l_j)$, ${\|\T^m x-y^{(j)}\|<\varepsilon_j}$ for some sequence $(\varepsilon_j)_j$ tending to $0$. Since $(y^{(j)})_j$ is a dense sequence in $\ell^1(\mathbb{N})$ and each set $A(s_j,l_j)$ has a positive lower density, it will then follow that $x$ is a frequently hypercyclic vector for $\T$.

Let $j\ge 1$ and $m\in A(s_j,l_j)$. We have
\begin{align*}
\|x^{(m)}\|&\le \sum_{n=0}^{n_{j+1}-1}\sum_{i=b_n}^{b_{n+1}-1}|y^{(j)}_i| \frac{2^{-\big(m-i+b_n-2N_j\Delta^{(j)}-2N_j\eta^{(j)}-1\big)}}{|v_{n_{k_m}+n}| 
\left(\prod_{t=b_n+1}^{i}|w_t|\right)}\\
&\le \|y^{(j)}\| \frac{2^{-\big(m-(2N_j+1)\Delta^{(j)}-2N_j\eta^{(j)}-1\big)}}{ \inf\{|v_{n_{k_m}+n}|:n<n_{j+1}\}2^{-\eta^{(j)}}}\\
&\le \|y^{(j)}\| \frac{2^{-\big(m-(2N_j+1)\Delta^{(j)}-(2N_j+1)\eta^{(j)}-1\big)}}{\inf\{2^{-\tau_n}:n<n_{j+1}\}}.
\end{align*}
Moreover, since $\min A(s,l)\ge l$, it follows from (3) that
\begin{align*}
&\sum_{j\ge 1}\sum_{m\in A(s_j,l_j)}\|y^{(j)}\| \frac{2^{-\big(m-(2N_j+1)\Delta^{(j)}-(2N_j+1)\eta^{(j)}-1\big)}}{\inf\{2^{-\tau_n}:n<n_{j+1}\}}\\
&\quad \le \sum_{j\ge 1}\|y^{(j)}\| \frac{2^{-\big(l_j-(2N_j+1)\Delta^{(j)}-(2N_j+1)\eta^{(j)}-2\big)}}{\inf\{2^{-\tau_n}:n<n_{j+1}\}}\le 1.
\end{align*}
We can thus deduce that $x$ is well-defined and belongs to $\ell^1(\mathbb{N})$.\\

Let $J\ge 1$ and $M\in A(s_J,l_J)$. In order to estimate $\|\T^Mx-y^{(J)}\|$, we compute the elements $\T^{M} x^{(m)}$ with $m\in \bigcup_{j\ge 1} A(s_{j},l_{j})$ by dividing our study into three cases: $m=M$, $m>M$ and $m<M$. Let $j\ge 1$ and $m\in A(s_{j},l_{j})$.
\begin{itemize}
\item Case 1 ($m=M$). Let $n< n_{J+1}$ and $i\in [b_n,b_{n+1})$. We have
\begin{align*}
&\T^{M-i+b_n-2N_J\Delta^{(J)}} e_{b_{n_{k_M}+n+1}-(M-i+b_n-2N_J\Delta^{(J)})}\\
&\quad=2^{M-i+b_n-2N_J\Delta^{(J)}-1}(v_{n_{k_M}+n}e_{b_n}-e_{b_{n_{k_M}+n}})
\end{align*}
since $1\le M-i+b_n-2N_J\Delta^{(J)}\le \delta^{(k_M)}$ by (3) and (4) and since $n_{k_M}+n\in [n_{k_M},n_{k_M +1})$ by $(4)$. 
Moreover, since $(2N_J+1)\Delta^{(J)}\le \eta^{(k_M)}$ by (4), we have by Proposition~\ref{propeigen}
\begin{align*}
&\T^{M-i+b_n}e_{b_{n_{k_M}+n+1}-(M-i+b_n-2N_J\Delta^{(J)})}\\
&\quad=2^{M-i+b_n-2N_J\Delta^{(J)}-2N_J\eta^{(J)}-1}v_{n_{k_M}+n}e_{b_n}\\
&\quad\quad-2^{M-i+b_n-4N_J\Delta^{(J)}-1}e_{b_{n_{k_M}+n}+2N_J\Delta^{(J)}}
\end{align*}
 and
\begin{align*}
&\T^{M}e_{b_{n_{k_M}+n+1}-(M-i+b_n-2N_J\Delta^{(J)})}\\
&\quad=2^{M-i+b_n-2N_J\Delta^{(J)}-2N_J\eta^{(J)}-1}\left(\prod_{t=b_n+1}^{i}w_t\right)v_{n_{k_M}+n}e_{i}\\
&\quad\quad-2^{M-2i+2b_n-4N_J\Delta^{(J)}-1}e_{b_{n_{k_M}+n}+2N_J\Delta^{(J)}+i-b_n}.
\end{align*}

It then follows from (1) that
\begin{align*}
&\|\T^Mx^{(M)}-y^{(J)}\|\\
&\quad\le \sum_{n=0}^{n_{J+1}-1}\sum_{i=b_n}^{b_{n+1}-1}|y^{(J)}_i| \frac{2^{-\big(M-i+b_n-2N_J\Delta^{(J)}-2N_J\eta^{(J)}-1\big)}}{|v_{n_{k_M}+n}| \left(\prod_{t=b_n+1}^{i}|w_t|\right)} 2^{M-2i+2b_n-4N_J\Delta^{(J)}-1}\\
&\quad \le \|y^{(J)}\|\frac{2^{-\big(2N_J\Delta^{(J)}-(2N_J+1)\eta^{(J)}\big)}}{\inf\{2^{-\tau_n}:n<n_{J+1}\}}\le\frac{1}{2^J}.
\end{align*}

\item Case 2 ($m>M$). Let $n< n_{j+1}$ and $i\in [b_n,b_{n+1})$. By the properties of $(A(s,l))$ and (2), we have 
\[m-i+b_n-2N_{j}\Delta^{(j)} \ge m-(2N_{j}+1)\Delta^{(j)}> m-s_{j}\ge M\]
and by (4), we have
\[m-i+b_n-2N_{j}\Delta^{(j)}\le m\le\delta^{(k_m)}.\] 
It then follows that
\[\T^M e_{b_{n_{k_m}+n+1}-(m-i+b_n-2N_{j}\Delta^{(j)})}=2^M e_{b_{n_{k_m}+n+1}+M-(m-i+b_n-2N_{j}\Delta^{(j)})}\]
and by using (2), we get

\begin{align*}
&\|\T^M x^{(m)}\|\\
&\quad\le \sum_{n=0}^{n_{j+1}-1}\sum_{i=b_n}^{b_{n+1}-1}|y^{(j)}_i| \frac{2^{-\big(m-i+b_n-2N_j\Delta^{(j)}-2N_j\eta^{(j)}-1\big)}}{|v_{n_{k_m}+n}|\left(\prod_{t=b_n+1}^{i}|w_t|\right)}2^M\\
&\quad \le \frac{\|y^{(j)}\|}{2^{m-M-s_j}} \frac{2^{-\big(s_j-(2N_j+1)\Delta^{(j)}-(2N_j+1)\eta^{(j)}-1\big)}}{\inf\{2^{-\tau_n}:n<n_{j+1}\}}\le \frac{1}{2^{m-M-s_j+j}}.
\end{align*}

\item Case 3 ($m<M$). Let $n< n_{j+1}$ and $i\in [b_n,b_{n+1})$. Since $m-i+b_n-2N_{j}\Delta^{(j)}\le m\le \delta^{(k_m)}$ by (4), we have
\begin{align*}
&\T^{m-i+b_n-2N_{j}\Delta^{(j)}} e_{b_{n_{k_m}+n+1}-(m-i+b_n-2N_{j}\Delta^{(j)})}\\
&\quad=2^{m-i+b_n-2N_{j}\Delta^{(j)}-1}v_{n_{k_m}+n}e_{b_n}-2^{m-i+b_n-2N_{j}\Delta^{(j)}-1}e_{b_{n_{k_m}+n}}
\end{align*}
and since $M-(m-i+b_n-2N_{j}\Delta^{(j)})\ge M-m\ge s_{j}$, we deduce from (2) and (4) that
\begin{align*}
&\|\T^{M} e_{b_{n_{k_m}+n+1}-(m-i+b_n-2N_{j}\Delta^{(j)})}\|\\
&\quad\le\frac{2^{m-i+b_n-2N_{j}\Delta^{(j)}-1}|v_{n_{k_m}+n}|}{2^{\frac{\eta^{(0)}}{3\Delta^{(0)}}(M-m)}}+\frac{2^{m-i+b_n-2N_{j}\Delta^{(j)}-1}}{2^{\frac{\eta^{(0)}}{3\Delta^{(0)}}(M-m)}}\\
&\quad\le \frac{2^{m-i+b_n-2N_{j}\Delta^{(j)}}}{2^{\frac{\eta^{(0)}}{3\Delta^{(0)}}(M-m)}}.
\end{align*}
We can then deduce from (2) that
\begin{align*}
&\|\T^M x^{(m)}\|\\
&\quad\le \sum_{n=0}^{n_{j+1}-1}\sum_{i=b_n}^{b_{n+1}-1}|y^{(j)}_i| \frac{2^{-\big(m-i+b_n-2N_j\Delta^{(j)}-2N_j\eta^{(j)}-1\big)}}{|v_{n_{k_{m}}+n}|\left(\prod_{t=b_n+1}^{i}|w_t|\right)}\frac{2^{m-i+b_n-2N_{j}\Delta^{(j)}}}{2^{\frac{\eta^{(0)}}{3\Delta^{(0)}}(M-m)}}\\
&\quad\le \|y^{(j)}\| \frac{2^{(2N_j+1)\eta^{(j)}+1}}{\inf\{2^{-\tau_n}:n<n_{j+1}\}2^{\frac{\eta^{(0)}}{3\Delta^{(0)}}(M-m)}}\\
&\quad \le \frac{\|y^{(j)}\|}{2^{\frac{\eta^{(0)}}{3\Delta^{(0)}}(M-m-s_j)}}\frac{2^{-\frac{\eta^{(0)}}{3\Delta^{(0)}}s_j+(2N_j+1)\eta^{(j)}+1}}{\inf\{2^{-\tau_n}:n<n_{j+1}\}}\\
&\quad \le \frac{1}{2^{\frac{\eta^{(0)}}{3\Delta^{(0)}}(M-m-s_j)+j}}
\end{align*}
\end{itemize}

In conclusion, thanks to properties of sets $A(s,l)$, we have for every $J\ge 1$, every $M\in A(s_J,l_J)$,
\begin{align*}
&\|T^Mx-y^{(J)}\|\\
&\le \frac{1}{2^J}+ \sum_{j\ge 1}\sum_{\substack{m\in A(s_j,l_j)\\ m>M}} \frac{1}{2^{m-M-s_j+j}}+ \sum_{j\ge 1}\sum_{\substack{m\in A(s_j,l_j)\\m<M}} \frac{1}{2^{\frac{\eta^{(0)}}{3\Delta^{(0)}}(M-m-s_j)+j}}\\
&\le \frac{1}{2^J}+ \sum_{j\ge 1}\sum_{m\ge M+s_j+s_J} \frac{1}{2^{m-M-s_j+j}}+ \sum_{j\ge 1}\sum_{m\le M-s_j-s_J} \frac{1}{2^{\frac{\eta^{(0)}}{3\Delta^{(0)}}(M-m-s_j)+j}}\\
&\le \frac{1}{2^J}+ \sum_{j\ge 1}\sum_{m\ge s_J} \frac{1}{2^{m+j}}+ \sum_{j\ge 1}\sum_{m\ge s_J} \frac{1}{2^{\frac{\eta^{(0)}}{3\Delta^{(0)}}m+j}}\\
&\le \frac{1}{2^J}+ \frac{1}{2^{s_J-1}}+ \frac{1}{2^{\frac{\eta^{(0)}}{3\Delta^{(0)}}s_J}} \left(\sum_{m\ge 0} 2^{-\frac{\eta^{(0)}}{3\Delta^{(0)}}m}\right)\xrightarrow[J\to \infty]{} 0
\end{align*}
We conclude that $x$ is a frequently hypercyclic vector for $\T$.
\end{proof}

It remains to show that $\T^{-1}$ is not frequently hypercyclic under convenient conditions on the parameters $(\tau_m)$, $(\delta^{(k)})$, $(\eta^{(k)})$ and $(\Delta^{(k)})$. The proof of this fact will rely on the study of dynamical behaviours of finite sequences under the action of $\T^{-1}$. Therefore, we begin by a technical lemma concerning finite sequences and given a vector $x\in \ell^1(\mathbb{N})$, we let for every $n\ge 0$ and for every $I\subset \mathbb{N}$ the following notations:
\[P_nx:=\sum_{k=b_n}^{b_{n+1}-1}x_k e_k\quad \text{and} \quad P_Ix=\sum_{n\in I}P_n x.\]

\begin{lemma}\label{lemend} Let $x\in \ell^1(\mathbb{N})$. The following conditions are satisfied:
\begin{enumerate}
\item $\|\T^{-1}x\|\le 2\|x\|$.
\item For every $l\ge 0$, every $s\ge 1$, every $n\in \varphi^{-s}(l)\backslash\{0\}$, every $j\ge 0$, \[\|P_l\T^{-j}P_{n}x\|\le 2^{j-\tau_{l+s-1}} \|P_{n}x\|.\] In particular, for every $l\ge 0$, every $s\ge 1$, every $j\ge 0$,
\[\|P_l\T^{-j}P_{\varphi^{-s}(l)\backslash\{0\}}x\|\le 2^{j-\tau_{l+s-1}} \|P_{\varphi^{-s}(l)\backslash\{0\}}x\|.\]
\item For every $k\ge 0$, every $l\in [n_k,n_{k+1})$, every $j\ge 0$,
\[\|P_{l}\T^{-j}P_lx\|\ge 2^{\eta^{(k)}\left\lfloor \frac{j}{\Delta^{(k)}}\right\rfloor-\delta^{(k)}}\|P_lx\|.\] 
In particular, for every $l\ge 0$, if $P_l x\ne 0$ then 
\[\lim_{j\to \infty}\|P_{l}\T^{-j}P_lx\|=\infty.\]
\end{enumerate}
\end{lemma}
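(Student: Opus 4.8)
The plan is to prove the three estimates separately, working from the explicit formula for $\T^{-1}$ given in Proposition~\ref{invertible} (with all $R_n=1$). The common thread is that on each block $[b_n,b_{n+1})$ the inverse acts as a backward shift with weights $\frac{1}{w_k}$, except at the left endpoint $e_{b_n}$ where a ``jump'' sends mass to blocks higher up the $\varphi$-tree, namely to the indices $b_{\varphi^{m+1}(n)+1}-1$ with coefficients $\prod_{l=0}^m v_{\varphi^l(n)}$, together with $-e_{b_{n+1}-1}$.

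\begin{proof}
\textbf{(1)} Since $\|\T^{-1}x\|\le \|\T^{-1}\|\,\|x\|$, it suffices to bound the norm of $\T^{-1}$ acting on a single basis vector in the $\ell^1$ operator norm. From the formula, the columns $\T^{-1}e_k$ for $k\in(b_n,b_{n+1})$ have $\ell^1$-norm $\frac{1}{|w_k|}\le 2$ by the definition of $(w_i)$ (whose values lie in $\{\tfrac12,1,2\}$), the column $\T^{-1}e_0$ has norm $1$, and the column $\T^{-1}e_{b_n}$ has norm $1+\sum_{m=0}^{m_n-1}\prod_{l=0}^m|v_{\varphi^l(n)}|$. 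The sum is bounded by $2$ exactly as in the proof of Corollary~\ref{cor inv} (using $|v_n|\le 2^{-\varphi(n)}$ and $\varphi^{m_n-m}(n)\ge m$), so every column has norm at most $3$; however the sharper constant $2$ follows because the jumped mass $\sum_m\prod_l|v_{\varphi^l(n)}|$ and the $-e_{b_{n+1}-1}$ term land on \emph{distinct} indices from the dominating weight-$2$ shift contributions. I would verify $\|\T^{-1}\|_{1\to 1}=\sup_k\|\T^{-1}e_k\|\le 2$ directly from the column bounds.

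\textbf{(2)} Here I track a single block $P_n x$ with $n\in\varphi^{-s}(l)\setminus\{0\}$ and follow the backward orbit. Applying $\T^{-1}$ repeatedly to $e_{b_n}$, the $\varphi$-jump factor $v_n$ moves mass to block $\varphi(n)$, then $v_{\varphi(n)}$ to block $\varphi^2(n)$, and so on, so that the component of $\T^{-j}P_nx$ landing in block $l=\varphi^s(n)$ carries the product $\prod_{t=0}^{s-1}v_{\varphi^t(n)}$. Each $v_{\varphi^t(n)}=2^{-\tau_{\varphi^{t+1}(n)}}$, and since the $\tau_m$ are increasing, the smallest exponent in this product is $\tau_{\varphi^s(n)}=\tau_l$; but using that $n\in\varphi^{-s}(l)$ forces $n\ge l+s-1$ along the branch (each application of $\varphi$ on this level set drops the index by at least one while staying above $l$), one obtains a factor dominated by $2^{-\tau_{l+s-1}}$. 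The remaining backward-shift steps each cost at most a factor $2$ per application (by (1) applied block-wise), giving the $2^{j}$ and hence the bound $2^{j-\tau_{l+s-1}}\|P_nx\|$. Summing over $n\in\varphi^{-s}(l)\setminus\{0\}$ yields the ``in particular'' statement by the triangle inequality, after observing the projected pieces add up on disjoint supports.

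\textbf{(3)} This is the quantitative lower bound and is the main obstacle, since it must exhibit \emph{growth} rather than decay. The key input is Proposition~\ref{propeigen}: $\T^{2(b_{n+1}-b_n)}x=2^{-2\eta_n}x$ on $\operatorname{span}\{e_k:k<b_{n+1}\}$, which inverts to $\T^{-2\Delta^{(k)}}P_lx=2^{2\eta^{(k)}}P_lx$ for $l\in[n_k,n_{k+1})$. Thus applying $\T^{-j}$ with $j=2\Delta^{(k)}q+r$, the periodic part contributes a factor $2^{2\eta^{(k)}q}$, i.e. genuine exponential growth governed by $\lfloor j/\Delta^{(k)}\rfloor$; the residual steps $0\le r<2\Delta^{(k)}$ can only shrink the block-$l$ component by the weights, whose product over one block is at least $2^{-\delta^{(k)}}$ by the definition of $(w_i)$ (the weight $\tfrac12$ occurs on at most $\eta^{(k)}+2\delta^{(k)}$ indices but the weight-$2$ stretch of length $\delta^{(k)}$ compensates all but $\delta^{(k)}$ halvings when projected back onto block $l$). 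Combining the growth factor $2^{\eta^{(k)}\lfloor j/\Delta^{(k)}\rfloor}$ with the loss $2^{-\delta^{(k)}}$ gives the stated inequality. The divergence $\|P_l\T^{-j}P_lx\|\to\infty$ as $j\to\infty$ for $P_lx\neq0$ is immediate since $\lfloor j/\Delta^{(k)}\rfloor\to\infty$ while $\eta^{(k)}>0$.
\end{proof}
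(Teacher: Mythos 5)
Your overall skeleton matches the paper's (explicit inverse formula, column norms for (1), a jump factor $v_n$ times $2^j$ for (2), eigen-periodicity plus a weight product for (3)), and your part (2) is essentially the paper's argument. But parts (1) and (3) each contain a genuine gap.

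In (1), your column bound for $\T^{-1}e_{b_n}$ is $1+\sum_{m=0}^{m_n-1}\prod_{l=0}^m|v_{\varphi^l(n)}|\le 3$, and the attempted improvement to $2$ via ``the jumped mass lands on distinct indices'' is invalid: on $\ell^1$ the operator norm is exactly $\sup_k\|\T^{-1}e_k\|_1$, and disjointness of supports cannot lower anything --- for disjointly supported vectors the triangle inequality is an \emph{equality}. To get the constant $2$ you must use the specific $v$ of this operator, not the generic bound of Corollary~\ref{cor inv}: since $(\tau_m)$ is an increasing sequence of positive integers, $|v_{\varphi^l(n)}|=2^{-\tau_{\varphi^{l+1}(n)}}\le \tfrac12$, so $\prod_{l=0}^m|v_{\varphi^l(n)}|\le 2^{-\tau_{\varphi(n)}-m}$ and the sum is at most $2^{-\tau_{\varphi(n)}+1}\le 1$, giving column norm $\le 2$; this is what the paper does. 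The constant matters: both your sketch of (2) and the paper's proof of (2) iterate (1) to produce the factor $2^{j}$, and a constant $3$ would wreck the exponent $j-\tau_{l+s-1}$.

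In (3), your decomposition $j=2\Delta^{(k)}q+r$ with $0\le r<2\Delta^{(k)}$, the exact relation $\T^{-2\Delta^{(k)}}P_lx=2^{2\eta^{(k)}}P_lx$, and a residual bound $\|P_l\T^{-r}P_lx\|\ge 2^{-\delta^{(k)}}\|P_lx\|$ only give $\|P_l\T^{-j}P_lx\|\ge 2^{2\eta^{(k)}\lfloor j/(2\Delta^{(k)})\rfloor-\delta^{(k)}}\|P_lx\|$, and $2\lfloor j/(2\Delta^{(k)})\rfloor$ is \emph{not} $\lfloor j/\Delta^{(k)}\rfloor$: whenever $\lfloor j/\Delta^{(k)}\rfloor$ is odd you fall short by a factor $2^{\eta^{(k)}}$. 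For instance $j=\Delta^{(k)}$ gives $q=0$, $r=\Delta^{(k)}$, so your estimate is $2^{-\delta^{(k)}}$ while the lemma claims $2^{\eta^{(k)}-\delta^{(k)}}$. The paper avoids this by working with a single period, $P_l\T^{-\Delta^{(k)}}P_lx=-2^{\eta^{(k)}}P_lx$ (iterable because under $\T^{-1}$ mass leaving block $l$ goes to blocks $\varphi^t(l)$, $t\ge1$, and never returns to block $l$), so the weight estimate is only needed for $r<\Delta^{(k)}$; repairing your route requires proving the stronger residual bound $2^{\eta^{(k)}-\delta^{(k)}}$ on $[\Delta^{(k)},2\Delta^{(k)})$, which needs that same no-return observation. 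A small further confusion: under $\T^{-1}$ the halvings come from the stretch of weights $w_t=2$ (length $\delta^{(k)}$), not from the weights $\tfrac12$; your parenthetical has this backwards, although the resulting bound $2^{-\delta^{(k)}}$ for $r<\Delta^{(k)}$ is correct since each weight is used at most once.
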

\begin{proof}
Let $x\in \ell^1(\mathbb{N})$.
\begin{enumerate}
\item  By Proposition~\ref{invertible}, we have
\begin{align*}
\|\T^{-1}x\|&\le \sum_{n\ge 0}\sum_{k=b_n+1}^{b_{n+1}-1}|x_k|\|\T^{-1}e_k\| + |x_{b_0}|\|\T^{-1}e_{b_0}\|+ \sum_{n\ge 1}|x_{b_n}|\|\T^{-1}e_{b_n}\|\\
&\le \sum_{n\ge 0}\sum_{k=b_n+1}^{b_{n+1}-1}\frac{|x_k|}{|w_k|} + |x_{b_0}|+\sum_{n\ge 1}|x_{b_n}|
\Big(1+\sum_{m=0}^{m_n-1}(\prod_{l=0}^m |v_{\varphi^l(n)}|)\Big)\\
&\le \sum_{n\ge 0}\sum_{k=b_n+1}^{b_{n+1}-1}2|x_k| + |x_{b_0}|+\sum_{n \ge 1}|x_{b_n}|
\Big(1+\sum_{m=0}^{m_n-1}\frac{1}{2^{\tau_{\varphi(n)}+m}}\Big)
\end{align*}
since $\tau_{j}\ge 1$ for every $j\ge 0$. Finally, since $\sum_{m=0}^{m_n-1}\frac{1}{2^{\tau_{\varphi(n)}+m}}\le \frac{1}{2^{\tau_{\varphi(n)}-1}}\le 1$, we conclude that $\|\T^{-1}x\|\le 2\|x\|$.
\item Let $l\ge 0$, $s\ge 1$, $n\in \varphi^{-s}(l)\backslash\{0\}$, $j \ge 0$ and $k\in [b_n,b_{n+1})$. If $j\le k-b_n$, we have $\|P_l\T^{-j}e_k\|=0$  and if $k-b_n<j\le k+b_{n+1}-2b_n$ then
\begin{align*}
&\|P_l\T^{-j}e_k\|\\
&\quad\le \left(\prod_{t=b_n+1}^{k}|w_t|\right)^{-1}\left\|P_l\T^{-(j-k+b_n-1)}\left(-e_{b_{n+1}-1}
-\sum_{m=0}^{m_n-1}(\prod_{l=0}^m v_{\varphi^l(n)})e_{b_{\varphi^{m+1}(n)+1}-1}\right)\right\|\\
&\quad\le 2^{k-b_n}\left\|\T^{-(j-k+b_n-1)}\left(\sum_{m=0}^{m_n-1}(\prod_{l=0}^m v_{\varphi^l(n)})e_{b_{\varphi^{m+1}(n)+1}-1}\right)\right\|\\
&\quad\le |v_{n}|2^{j-1}\sum_{m=0}^{m_n-1}(\prod_{l=1}^m|v_{\varphi^l(n)}|)\le |v_{n}|2^{j}.
\end{align*}
In addition, if $k+b_{n+1}-2b_n<j<2(b_{n+1}-b_n)$ then $\|P_l\T^{-j}e_k\|=0$. Since $\T^{2(b_{n+1}-b_n)}e_k=2^{-2\eta_{n}}e_k$, we have $\T^{-2(b_{n+1}-b_n)}e_k=2^{2\eta_{n}}e_k$ and since $2^{2\eta_{n}}\le 2^{2(b_{n+1}-b_n)}$, we can deduce that if $j\ge 2(b_{n+1}-b_n)$ then $\|P_l\T^{-j}e_k\|\le |v_{n}|2^{j}$. Therefore, we can write
\[
\|P_l\T^{-j}P_nx\|\le \sum_{k=b_n}^{b_{n+1}-1}|x_k| \|P_l\T^{-j}e_k\| 
\le \sum_{k=b_n}^{b_{n+1}-1}|x_k| |v_{n}|2^{j}\le 2^{j-\tau_{l+s-1}} \|P_{n}x\|
\]
because $(\tau_m)_m$ is increasing and if $n\in \varphi^{-s}(l)\backslash\{0\}$ then $\varphi(n)\ge l+s-1$.
\item Let $k\ge 0$ and $l\in [n_k,n_{k+1})$. We can remark that $P_l\T^{-\Delta^{(k)}}P_lx=-2^{\eta^{(k)}}P_lx$ and it then suffices to prove that for every $j<\Delta^{(k)}$, $\|P_{l}\T^{-j}P_lx\|\ge 2^{-\delta^{(k)}}\|P_lx\|$. Let $j<\Delta^{(k)}$ and $m\in [b_l,b_{l+1})$.
\begin{itemize}
\item If $0\le j\le m-b_l$, we have $\|P_{l}\T^{-j}P_le_m\|=\prod_{t=m-j+1}^{m}|w_t|^{-1}\ge 2^{-\delta^{(k)}}$.
\item If $m-b_l< j< \Delta^{(k)}$, we have \[\|P_{l}\T^{-j}P_le_m\|=\left(\prod_{t=b_l+1}^{m}|w_t|^{-1}\right)\left(\prod_{t=b_{l+1}-j+m-b_l+1}^{b_{l+1}-1}|w_t|^{-1}\right)\] and since $b_{l+1}-j+m-b_l+1> m$, each weight is taken at most one time. We can then deduce from the definition of $(w_i)_i$ that \[\|P_{l}\T^{-j}P_le_m\|\ge 2^{-\delta^{(k)}}.\]
\end{itemize}
\end{enumerate}
\end{proof}

We can now prove that if the sequence $(\tau_m)$ grows sufficiently rapidly then $\T^{-1}$ is not frequently hypercyclic.

\begin{prop}\label{prelim}
Let $S_l=\sum_{l'\le l}(2(b_{l'+1}-b_{l'})+l')$ for every $l\ge 0$ and let $(J_l)_{l\ge 0}$ be a sequence of positive integers such that for every $j\ge J_l$, every $x\in \ell^1(\mathbb{N})$, $\|P_l\T^{-j}P_lx\|\ge  2^{S_l}\|P_lx\|$. If for every $l\ge 0$, 
\[\tau_l\ge S_l+2\eta_l+\delta_l+2l+3 \quad\text{and}\quad \frac{J_l}{\tau_l-l-S_l-\delta_l-3}\le \frac{1}{2^l},\]  
then $\T^{-1}$ is not frequently hypercyclic.
\end{prop}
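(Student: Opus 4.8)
The plan is to argue by contradiction: suppose $x\in\ell^1(\N)$ is a frequently hypercyclic vector for $\T^{-1}$ and derive that some ball around $0$ is visited with zero lower density. Writing $p_l=\|P_lx\|$, I would decompose, for each block $l\ge 1$,
\[P_l\T^{-j}x=P_l\T^{-j}P_lx+\sum_{s\ge 1}P_l\T^{-j}P_{\varphi^{-s}(l)\setminus\{0\}}x,\]
the point being that under $\T^{-1}$ mass travels strictly downwards along $\varphi$, so the only blocks that can feed block $l$ are the higher blocks $\varphi^{-s}(l)$. The first (\emph{self}) term is controlled from below: by the definition of $J_l$ and of $S_l$ together with Lemma~\ref{lemend}(3), $\|P_l\T^{-j}P_lx\|\ge 2^{S_l}p_l$ for every $j\ge J_l$. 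The second (\emph{leakage}) term is controlled from above by Lemma~\ref{lemend}(2): summing the bound $2^{j-\tau_{l+s-1}}\|P_{\varphi^{-s}(l)\setminus\{0\}}x\|$ over $s\ge 1$ and using that $(\tau_m)$ is increasing gives a total leakage into block $l$ of at most $2^{j+1-\tau_l}\|x\|$.

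Next I would combine these on the time window $J_l\le j\le W_l$, where $W_l:=\tau_l-l-S_l-\delta_l-3$ is exactly the denominator appearing in the hypothesis; conditions (a)--(b) guarantee $0\le J_l\le 2^{-l}W_l$, so the window is nonempty and long. On it the leakage is at most $2^{W_l+1-\tau_l}\|x\|=2^{-l-S_l-\delta_l-2}\|x\|$, which is negligible against the self term, whence
\[\|\T^{-j}x\|\ge\|P_l\T^{-j}x\|\ge 2^{S_l}p_l-2^{-l-S_l-\delta_l-2}\|x\|\qquad(J_l\le j\le W_l).\]
Thus, for a suitable radius $\varepsilon>0$, the orbit of $x$ avoids the ball $B(0,\varepsilon)$ throughout $[J_l,W_l]$, so the return times below $W_l$ lie in the short initial segment $[0,J_l]$. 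Since condition (b) says precisely $J_l\le 2^{-l}W_l$, evaluating the counting function of $N_{\T^{-1}}(x,B(0,\varepsilon))$ at $N=W_l$ along a sequence of blocks $l\to\infty$ gives a proportion of return times at most $J_l/W_l\le 2^{-l}\to 0$. This forces $\underline{\text{dens}}\,N_{\T^{-1}}(x,B(0,\varepsilon))=0$, contradicting frequent hypercyclicity of $\T^{-1}$.

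The delicate part is to make this last step run for one fixed ball and along infinitely many scales simultaneously, for an \emph{arbitrary} candidate $x$: a priori the masses $p_l$ may decay very fast, and a block with $2^{S_l}p_l$ too small produces no useful window. This is what the cumulative choice $S_l=\sum_{l'\le l}\bigl(2(b_{l'+1}-b_{l'})+l'\bigr)\ge 2b_{l+1}$ and the rapid growth forced on $\tau_l$ by (a)--(b) are designed to absorb: the amplification $2^{S_l}$ is large enough to dominate both the fixed target and the combined contribution of all lower blocks, while $\tau_l$ is large enough that the leakage — which is only an upper estimate and could in principle cancel the self term — stays negligible on the whole window $[J_l,W_l]$. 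I would isolate separately the vectors with too-light tails (in particular the finitely supported ones), disposing of them via the fact that $\T^{-1}$ transports mass strictly downwards while amplifying it (Lemma~\ref{lemend}(3)): such a vector cannot build up, with positive lower frequency, the high-block components needed to approximate vectors supported high up, so it cannot be frequently hypercyclic either. Balancing these two regimes against the single density estimate is the main obstacle, and it is exactly the role played by the two quantitative hypotheses of the proposition.
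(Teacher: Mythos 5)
Your first half reproduces, essentially correctly, the paper's basic mechanism: the decomposition $P_l\T^{-j}x=P_l\T^{-j}P_lx+\sum_{s\ge1}P_l\T^{-j}P_{\varphi^{-s}(l)\setminus\{0\}}x$, the lower bound $2^{S_l}\|P_lx\|$ on the self term for $j\ge J_l$, the upper bound $2^{j+1-\tau_l}\|x\|$ on the total leakage obtained by summing Lemma~\ref{lemend}(2) over $s$, and the way the two quantitative hypotheses turn a window of avoidance into the density estimate $J_l/W_l\le 2^{-l}$. But the argument collapses exactly at the point you yourself flag as ``the delicate part'': to run it you need a single $\varepsilon>0$ and \emph{infinitely many} blocks $l$ with $2^{S_l}\|P_lx\|\ge\varepsilon$, and nothing in the proposal establishes this for an arbitrary candidate $x$. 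Your proposed remedy --- a dichotomy that ``isolates vectors with too-light tails'' and disposes of them by a qualitative mass-transport remark --- is not an argument: the difficulty is not confined to a special class of vectors (every $x\in\ell^1(\N)$ has $\|P_lx\|\to0$, and nothing a priori prevents $2^{S_l}\|P_lx\|\to 0$ as well), so what is needed is a proof that \emph{every} hypercyclic vector keeps enough amplified mass in infinitely many blocks, not a case distinction.

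That missing step is precisely the heart of the paper's proof. Starting from one block $l_0$ with $\|P_{l_0}x\|\ge 2^{-l_0}\|x-P_0x\|$, the paper exploits the returns of the orbit near $0$ (available because $x$ is hypercyclic): since the self term tends to infinity by Lemma~\ref{lemend}(3), each such return forces the leakage to catch up with the self term, so the time $j_m=\min\{j\ge0:\sum_{s\ge1}\|P_{l_{m-1}}\T^{-j}P_{\varphi^{-s}(l_{m-1})}x\|>\tfrac14\|P_{l_{m-1}}\T^{-j}P_{l_{m-1}}x\|\}$ is finite; a pigeonhole argument then produces $s_m$ and a higher block $l_m\in\varphi^{-s_m}(l_{m-1})$ carrying a definite share of that leakage, and --- using the eigenvector relation $\T^{-2\Delta^{(k_m)}}P_{l_m}x=2^{2\eta^{(k_m)}}P_{l_m}x$, Lemma~\ref{lemend}(2)--(3), the constancy of $\eta^{(k)}/\Delta^{(k)}$, and the hypothesis $\tau_l\ge S_l+2\eta_l+\delta_l+2l+3$ --- the key mass-retention inequality $2^{S_{l_m}}\|P_{l_m}x\|\ge\|P_{l_0}x\|$. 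This inductive chain $l_0<l_1<l_2<\cdots$ is what guarantees that the \emph{single} ball of radius $\tfrac34\|P_{l_0}x\|$ is avoided on every window $[J_{l_{m-1}},j_m)$, where moreover the right endpoint is defined dynamically (and shown to be long, $j_m>\tau_{l_{m-1}+s_m-1}-l_0-S_{l_{m-1}}-\delta_{l_{m-1}}-s_m-2$, by the same leakage estimates), rather than being your fixed endpoint $W_l$. Without this construction, or a substitute for it, your density computation applies only to those $x$ that happen to keep $2^{S_l}\|P_lx\|$ bounded below along infinitely many blocks, and so the proposal does not prove the proposition.
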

\begin{proof}
Let  $S_l=\sum_{l'\le l}(2(b_{l'+1}-b_{l'})+l')$ for every $l\ge 0$ and let $(J_l)_{l\ge 0}$ be a sequence of positive integers such that for every $j\ge J_l$, every $x\in \ell^1(\mathbb{N})$, 
\[\|P_l\T^{-j}P_lx\|\ge  2^{S_l}\|P_lx\|.\]
Let $x\in \ell^1(\mathbb{N})$ be a hypercyclic vector for $\T^{-1}$. We can already remark that for every $j\ge 0$, every $n\ge 0$, we have
\begin{equation*}\label{useful}
\|\T^{-j}x\|\ge \|P_{n}\T^{-j}x\|\ge \|P_{n}\T^{-j}P_{n}x\|-\sum_{s\ge 1}\|P_{n}\T^{-j}P_{\varphi^{-s}(n)\backslash\{0\}}x\|.
\end{equation*}
In particular, if $P_nx\ne 0$, it follows that the set $\{j\ge 0:\sum_{s\ge 1}\|P_{n}\T^{-j}P_{\varphi^{-s}(n)\backslash\{0\}}x\|> \frac{\|P_{n}\T^{-j}P_{n}x\|}{4}\}$ is non-empty since 
\[\sum_{s\ge 1}\|P_{n}\T^{-j}P_{\varphi^{-s}(n)\backslash\{0\}}x\|\ge \|P_{n} \T^{-j}P_{n} x\|-\|P_{n} \T^{-j}x\|,\] 
$\|P_{n}\T^{-j}P_{n}x\|$ tends to $\infty$ as $j\to \infty$ (Lemma~\ref{lemend} (3)) and for every $J$, there exists $j\ge J$ such that $\|\T^{-j}x\|\le 1$ since $x$ is hypercyclic. Moreover, since $x$ is hypercyclic for $\T^{-1}$, $x-P_0 x\ne 0$ and we can consider $l_0\ge 1$ such that $\|P_{l_0}x\|\ge \frac{1}{2^{l_0}}\|x-P_0x\|$. Finally, we let $k_0$ such that $l_0\in [n_{k_0},n_{k_0+1})$.

The goal of this proof will be to show that there exists an increasing sequence $(l_m)_{m\ge 0}$ and a sequence $(j_m)_{m\ge 1}$ tending to infinity such that for every $m\ge 1$,
\[\frac{\#\{j< j_{m}:\|\T^{-j}x\|\ge \frac{3}{4}\|P_{l_0}x\|\}}{j_{m}}\ge 1-\frac{1}{2^{l_{m-1}}}.\]
It will then follow that $x$ is not frequently hypercyclic for $\T^{-1}$ since $\|P_{l_0}x\|>0$ and thus that $\T^{-1}$ is not frequently hypercyclic.

We first show that if there exist sequences $(l_m)_{m\ge 0}$ and $(j_m)_{m\ge 1}$ such that for every $m\ge 1$,
\[j_m:=\min\left\{j\ge 0:\sum_{s\ge 1}\|P_{l_{m-1}}\T^{-j}P_{\varphi^{-s}(l_{m-1})}x\|> \frac{\|P_{l_{m-1}}\T^{-j}P_{l_{m-1}}x\|}{4}\right\}\]
and such that for every $m\ge 1$, there exists $s_m\ge 1$ so that
\begin{itemize}
\item $l_m\in \varphi^{-s_m}(l_{m-1})$,
\item $\|P_{l_{m-1}}\T^{-j_m}P_{\varphi^{-s_m}(l_{m-1})}x\|>\frac{\|P_{l_{m-1}}\T^{-j_m}P_{l_{m-1}}x\|}{2^{s_m+2}}$,
\item $2^{S_{l_m}}\|P_{l_{m}}x\|\ge \|P_{l_0}x\|$,
\end{itemize}
then $(l_m)_{m\ge 0}$ is an increasing sequence,  $(j_m)_{m\ge 1}$ tends to infinity and for every $m\ge 1$,
\[\frac{\#\{j< j_{m}:\|\T^{-j}x\|\ge \frac{3}{4}\|P_{l_0}x\|\}}{j_{m}}\ge 1-\frac{1}{2^{l_{m-1}}}.\]

Let $m\ge 1$. Note that $j_m$ is well-defined since $P_{l_{m-1}}x\ne 0$. Indeed, since $\|P_{l_0}x\|>0$ and since for every $n\ge 1$, 
\[2^{S_{l_n}}\|P_{l_{n}}x\|\ge \|P_{l_0}x\|\]
it follows that $P_{l_{m-1}}x\ne 0$. We can also remark that the sequence $(l_m)_{m\ge 1}$ is increasing since $l_m\in \varphi^{-s_m}(l_{m-1})$ and $s_m\ge 1$.

On the other hand, we have \[j_m>\tau_{l_{m-1}+s_m-1}-l_0-S_{l_{m-1}}-\delta_{l_{m-1}}-s_{m}-2\] since for every $j\le \tau_{l_{m-1}+s_m-1}-l_0-S_{l_{m-1}}-\delta_{l_{m-1}}-s_{m}-2$, we have by Lemma~\ref{lemend}~(2)-(3)
 \begin{align*} 
 \|P_{l_{m-1}}\T^{-j}P_{\varphi^{-s_m}(l_{m-1})}x\|&\le 2^{j-\tau_{l_{m-1}+s_m-1}}\|P_{\varphi^{-s_m}(l_{m-1})}x\|\\
 &\le 2^{j-\tau_{l_{m-1}+s_m-1}} \|x-P_0 x\|\\
 &\le 2^{j-\tau_{l_{m-1}+s_m-1}+l_0} \|P_{l_0} x\|\\
 &\le 2^{j-\tau_{l_{m-1}+s_m-1}+l_0+S_{l_{m-1}}}\|P_{l_{m-1}}x\|\\
 &\le 2^{j-\tau_{l_{m-1}+s_m-1}+l_0+S_{l_{m-1}}+\delta_{l_{m-1}}}\|P_{l_{m-1}}\T^{-j}P_{l_{m-1}}x\|\\
 &\le \frac{\|P_{l_{m-1}}\T^{-j}P_{l_{m-1}}x\|}{2^{s_m+2}}.
 \end{align*}
 In particular, we have
 \begin{align*}
 j_m&\ge S_{l_{m-1}+s_m-1}+\delta_{l_{m-1}+s_m-1}+2l_{m-1}+2s_m+1-l_0-S_{l_{m-1}}-\delta_{l_{m-1}}-s_{m}-2\\
 &\ge l_{m-1}
 \end{align*} since $(S_j)_j$, $(\delta_j)_j$ and $(l_j)_j$ are increasing, and thus the sequence $(j_m)_{m\ge 1}$ tends to infinity. By assumption, we also have $\|P_{l_{m-1}}\T^{-j}P_{l_{m-1}}x\|\ge  2^{S_{l_{m-1}}}\|P_{l_{m-1}}x\|$ for every $j\ge J_{l_{m-1}}$
 and \begin{align*}
 \frac{j_m-J_{l_{m-1}}}{j_m}&\ge 1-\frac{J_{l_{m-1}}}{\tau_{l_{m-1}+s_m-1}-l_0-S_{l_{m-1}}-\delta_{l_{m-1}}-s_m-2}\\
 &\ge 1-\frac{J_{l_{m-1}}}{\tau_{l_{m-1}+s_m-1}-l_{m-1}-S_{l_{m-1}}-\delta_{l_{m-1}}-s_m-2}\\
 &\ge 1-\frac{J_{l_{m-1}}}{\tau_{l_{m-1}}-l_{m-1}-S_{l_{m-1}}-\delta_{l_{m-1}}-3}\\
 &\ge 1-\frac{1}{2^{l_{m-1}}}
 \end{align*}
since $\tau_{l_{m-1}+s_m-1}\ge \tau_{l_{m-1}}+s_m-1$.
We can then conclude that
\[\frac{\#\{j< j_m:\|\T^{-j}x\|\ge \frac{3}{4}\|P_{l_0}x\|\}}{j_m}\ge 1-\frac{1}{2^{l_{m-1}}}\]
since for every $J_{l_{m-1}}\le j< j_m$, by definition of $j_m$,
\begin{align*}
\|\T^{-j}x\|&\ge \|P_{l_{m-1}}\T^{-j}P_{l_{m-1}}x\|-\sum_{s=1}^{\infty}\|P_{l_{m-1}}\T^{-j}P_{\varphi^{-s}(l_{m-1})}x\|\\
&\ge \frac{3}{4}\|P_{l_{m-1}}\T^{-j}P_{l_{m-1}}x\|\\
&\ge \frac{3}{4}2^{S_{l_{m-1}}}\|P_{l_{m-1}}x\|\\
&\ge \frac{3}{4}\|P_{l_{0}}x\|.
\end{align*}

In order to conclude the proof, it remains to show that there exist sequences $(l_m)_{m\ge 0}$ and $(j_m)_{m\ge 1}$ such that for every $m\ge 1$,
\[j_m:=\min\left\{j\ge 0:\sum_{s\ge 1}\|P_{l_{m-1}}\T^{-j}P_{\varphi^{-s}(l_{m-1})}x\|> \frac{\|P_{l_{m-1}}\T^{-j}P_{l_{m-1}}x\|}{4}\right\}\]
and such that for every $m\ge 1$, there exists $s_m\ge 1$ so that
\begin{itemize}
\item $l_m\in \varphi^{-s_m}(l_{m-1})$,
\item $\|P_{l_{m-1}}\T^{-j_m}P_{\varphi^{-s_m}(l_{m-1})}x\|>\frac{\|P_{l_{m-1}}\T^{-j_m}P_{l_{m-1}}x\|}{2^{s_m+2}}$,
\item $2^{S_{l_{m}}}\|P_{l_{m}}x\|\ge \|P_{l_0}x\|$.
\end{itemize}

Assume that $(l_t)_{0\le t\le m-1}$ and $(j_t)_{1\le t\le m-1}$ have been chosen and satisfy the above conditions. Let $j_m=\min\{j\ge 0:\sum_{s\ge 1}\|P_{l_{m-1}}\T^{-j}P_{\varphi^{-s}(l_{m-1})}x\|> \frac{\|P_{l_{m-1}}\T^{-j}P_{l_{m-1}}x\|}{4}\}$. By definition of $j_m$, there exists $s_m\ge 1$ such that 
\[\|P_{l_{m-1}}\T^{-j_m}P_{\varphi^{-s_m}(l_{m-1})}x\|>\frac{\|P_{l_{m-1}}\T^{-j_m}P_{l_{m-1}}x\|}{2^{s_m+2}}\]
and there exists $l_m\in \varphi^{-s_m}(l_{m-1})$ such that 
\[\|P_{l_{m-1}}\T^{-j_m}P_{l_m}x\|>\frac{\|P_{l_{m-1}}\T^{-j_m}P_{l_{m-1}}x\|}{2^{l_m+s_m+3}}.\]
It remains to show that $2^{S_{l_{m}}}\|P_{l_{m}}x\|\ge \|P_{l_0}x\|$. Let $k_m$ such that $l_m\in [n_{k_m},n_{k_m+1})$. 
 It follows from Lemma~\ref{lemend} (3) that
 \[\|P_{l_{m-1}}\T^{-j_m}P_{l_{m-1}}x\|\ge 2^{\eta^{(k_{m-1})}\left\lfloor\frac{j_m}{\Delta^{(k_{m-1})}}\right\rfloor-\delta^{(k_{m-1})}} \|P_{l_{m-1}}x\|. \] On the other hand, since $\T^{-\left\lfloor \frac{j_m}{2\Delta^{(k_m)}}\right\rfloor 2\Delta^{(k_m)}}P_{l_m}x=2^{2\eta^{(k_m)} \left\lfloor \frac{j_m}{2\Delta^{(k_m)}}\right\rfloor}P_{l_m}x$, it follows from Lemma~\ref{lemend}~(2) that
\begin{align*}
\|P_{l_{m-1}}\T^{-j_m}P_{l_m}x\|&\le 2^{2\eta^{(k_m)} \left\lfloor \frac{j_m}{2\Delta^{(k_m)}}\right\rfloor}\sup_{0\le j< 2\Delta^{(k_m)}}\|P_{l_{m-1}}\T^{-j}P_{l_m}x\|\\
&\le 2^{2\eta^{(k_m)}\frac{j_m}{2\Delta^{(k_m)}}}2^{2\Delta^{(k_m)}-\tau_{l_{m-1}+s_m-1}}\|P_{l_m}x\|.
\end{align*}
Since $\tau_{l_{m-1}+s_m-1}\ge 2\eta_{l_{m-1}+s_m-1}+\delta_{l_{m-1}+s_m-1}+s_m+3$, we deduce from three previous inequalities that 
\begin{align*}
\|P_{l_m}x\|&\ge \frac{2^{2\eta^{(k_{m-1})}\left(\frac{j_m}{\Delta^{(k_{m-1})}}-1\right)-\delta^{(k_{m-1})}}}{2^{2\eta^{(k_m)}\frac{j_m}{2\Delta^{(k_m)}}}2^{2\Delta^{(k_m)}-\tau_{l_{m-1}+s_m-1}}2^{l_m+s_m+3}}\|P_{l_{m-1}}x\|\\
&=\frac{2^{\tau_{l_{m-1}+s_m-1}}}{2^{2\eta^{(k_{m-1})}+\delta^{(k_{m-1})}+2\Delta^{(k_m)}+l_m+s_m+3}}\|P_{l_{m-1}}x\|\quad\text{since $\frac{\eta^{(k_{m-1})}}{\Delta^{(k_{m-1})}}=\frac{\eta^{(k_m)}}{\Delta^{(k_m)}}$}\\
&\ge \frac{1}{2^{2\Delta^{(k_m)}+l_m}}\|P_{l_{m-1}}x\|\\
&\ge \frac{1}{2^{2\Delta^{(k_m)}+l_m}} 2^{-S_{l_{m-1}}}\|P_{l_{0}}x\|\\
&\ge 2^{-S_{l_{m}}}\|P_{l_{0}}x\|\qquad\text{(by definition of $S_l$)}.
\end{align*}
\end{proof}

We conclude this paper by showing that it is possible to find parameters satisfying all required properties in order to obtain the first example of invertible frequently hypercyclic operator whose inverse is not frequently hypercyclic.

\begin{theorem}
There exists an invertible frequently hypercyclic operator $T$ on $\ell^{1}(\mathbb{N})$ such that $T^{-1}$ is not frequently hypercyclic.
\end{theorem}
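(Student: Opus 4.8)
The plan is to verify that the hypotheses accumulated in Propositions~\ref{fhc} and~\ref{prelim} can be realized simultaneously by a single admissible choice of the four parameter sequences $(\tau_m)$, $(\delta^{(k)})$, $(\eta^{(k)})$ and $(\Delta^{(k)})$. Once such parameters are fixed, invertibility of $\T$ follows from Corollary~\ref{cor inv} (because $(\tau_m)$ is increasing), frequent hypercyclicity of $\T$ follows from Proposition~\ref{fhc} (which imposes \emph{no} condition on the parameters beyond the structural ones), and the failure of frequent hypercyclicity of $\T^{-1}$ follows from Proposition~\ref{prelim}. The whole theorem thus reduces to the arithmetic task of choosing the sequences so that the two inequalities of Proposition~\ref{prelim} hold.

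The key structural observation that makes this possible is that the quantities controlling $\T^{-1}$ split into two independent groups. On one hand, $S_l=\sum_{l'\le l}(2(b_{l'+1}-b_{l'})+l')$, together with $\eta_l$, $\delta_l$ and the thresholds $J_l$ supplied by Lemma~\ref{lemend}~(3), depend only on the geometric data $(\delta^{(k)})$, $(\eta^{(k)})$, $(\Delta^{(k)})$: indeed the lower bound in Lemma~\ref{lemend}~(3) comes from the return map $P_l\T^{-j}P_l$, which involves only the weights $(w_i)$ and not the sequence $(v_n)$. On the other hand, $(\tau_m)$ enters only through $v_n=2^{-\tau_{\varphi(n)}}$ and appears in none of these quantities. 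Hence there is no circular dependence, and I can choose the parameters in a definite order.

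First I would fix the geometry, for instance
\[\Delta^{(k)}=4^{k+1},\qquad \eta^{(k)}=4^{k},\qquad \delta^{(k)}=4^{k}\qquad(k\ge 0).\]
All the structural requirements are then immediate: the ratio $\eta^{(k)}/\Delta^{(k)}=\tfrac14$ is constant, $2\delta^{(k)}+\eta^{(k)}=3\cdot 4^k<4^{k+1}=\Delta^{(k)}$, and $\Delta^{(k+1)}/\Delta^{(k)}=4$ is an even integer, so that $\Delta^{(k)}$ is a multiple of $2\Delta^{(k')}$ whenever $k>k'$ and the divisibility condition on $(b_n)$ is satisfied. With the geometry fixed, the numbers $S_l$, $\eta_l$, $\delta_l$ and the finite thresholds $J_l$ are determined. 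I would then define $(\tau_m)$ recursively by
\[\tau_l=\max\Big\{\tau_{l-1}+1,\; S_l+2\eta_l+\delta_l+2l+3,\; 2^l J_l+l+S_l+\delta_l+3\Big\},\]
with $\tau_0$ chosen to exceed the corresponding bound for $l=0$. This produces a strictly increasing sequence of positive integers satisfying both $\tau_l\ge S_l+2\eta_l+\delta_l+2l+3$ and $\tau_l-l-S_l-\delta_l-3\ge 2^l J_l$, the latter being exactly $J_l/(\tau_l-l-S_l-\delta_l-3)\le 2^{-l}$, i.e. the two hypotheses of Proposition~\ref{prelim}.

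The only point requiring genuine care, and the one I would treat as the main obstacle, is to confirm that this staged choice is legitimate: the lower bounds imposed on $\tau_l$ must be finite numbers fixed \emph{before} $(\tau_m)$ is introduced (so that the recursion is well posed), and enlarging $(\tau_m)$ must never destroy the properties already secured. The first is guaranteed by the independence discussed above; the second is immediate, since Corollary~\ref{cor inv} and Proposition~\ref{fhc} hold for every admissible, in particular every sufficiently large increasing, choice of $(\tau_m)$. Assembling the three conclusions then yields an invertible frequently hypercyclic operator $\T$ on $\ell^1(\mathbb{N})$ whose inverse is not frequently hypercyclic, which is the assertion of the theorem.
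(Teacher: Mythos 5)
Your proposal is correct and follows essentially the same route as the paper: fix the geometric parameters first (the paper uses $\Delta^{(k)}=8^{k+1}$, $\eta^{(k)}=\delta^{(k)}=8^{k}$ where you use base $4$), observe that $S_l$, $\eta_l$, $\delta_l$ and the thresholds $J_l$ from Lemma~\ref{lemend}~(3) are independent of $(\tau_m)$ because $P_l\T^{-j}P_l$ does not involve $(v_n)$, and then choose $(\tau_m)$ increasing rapidly enough to meet the two hypotheses of Proposition~\ref{prelim}. Your explicit recursion for $\tau_l$ is just a concrete instance of the paper's ``rapidly increasing sequence,'' so no substantive difference remains.
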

\begin{proof}
Let $n_0=0$ and $n_k=2^{k-1}$ for every $k\ge 1$. For every $k\ge 0$, we let 
\[\Delta^{(k)}=8^{k+1}\quad \text{and}\quad \eta^{(k)}=\delta^{(k)}=8^{k}\]
so that
\[2\delta ^{(k)}+\eta^{(k)}<\Delta^{(k)},\quad \text{$\Delta^{(k+1)}$ is a multiple of $2\Delta^{(k)}$} \quad
\text{and}\quad\frac{\eta^{(k)}}{\Delta^{(k)}}=\frac{\eta^{(0)}}{\Delta^{(0)}}.\]
Let $(\tau_n)_{n\ge 0}$ be an increasing sequence of positive integers and $T$ the operator $T_{v,w,\varphi,b,R}$ such that
\begin{itemize}
\item for every $n\in [n_k,n_{k+1})$, $\varphi(n)=n-n_k$;
\item for every $m$, every $n\in \varphi^{-1}(m)$, $v_n=2^{-\tau_m}$;
\item for every $k\ge 0$, for every $n\in [n_k,n_{k+1})$, every $i\in (b_n,b_{n+1})$,
\[
w_i=
\begin{cases}
  \frac{1}{2} & \quad\text{if}\ \ b_n< i\le b_n+\eta_n\\
 1 & \quad\text{if}\ \ b_n+\eta_n< i< b_{n+1}-2\delta_n\\
 \frac{1}{2} & \quad\text{if}\ \  b_{n+1}-2\delta_n\le i <b_{n+1}-\delta_n\\
 2& \quad\text{if}\ \ b_{n+1}-\delta_n\le i<b_{n+1}\\
\end{cases}
\]
\item for every $n\ge 0$, $R_n=1$;
\end{itemize}
and such for every $k\ge 0$, for every $n\in [n_k,n_{k+1})$,
\[
 \delta_n=\delta^{(k)},\quad \eta_n=\eta^{(k)}\quad \text{and}\quad b_{n+1}-b_n=\Delta^{(k)}.\]
The operator $T$ is well-defined, invertible (Corollary \ref{cor inv}) and frequently hypercyclic (Proposition~\ref{fhc}) for any choice of $(\tau_n)_{n\ge 0}$. Moreover, for every $x\in \ell^1(\mathbb{N})$, every $l\ge 0$, the sequence $(P_l\T^{-j}P_lx)_{j\ge 0}$ does not depend on $(\tau_n)_{n\ge 0}$. Let $S_l=\sum_{l'\le l}(2(b_{l'+1}-b_{l'})+l')$ for every $l\ge 0$. By using Lemma~\ref{lemend}~(3), we can thus find a sequence $(J_l)_{l\ge 0}$ such that for every increasing sequence $(\tau_n)_{n\ge 0}$, every $j\ge J_l$ and every $x\in \ell^1(\mathbb{N})$, 
 \[\|P_l T^{-j}P_lx\|\ge  2^{S_l}\|P_lx\|.\]
By choosing for $(\tau_n)_{n\ge 0}$ a rapidly increasing sequence so that for every $l\ge 0$, 
\[\tau_l\ge S_l+2\eta_l+\delta_l+2l+3 \quad\text{and}\quad \frac{J_l}{\tau_l-l-S_l-\delta_l-3}\le \frac{1}{2^l},\]  
we can then deduce from Proposition~\ref{prelim} that $T^{-1}$ is not frequently hypercyclic.
\end{proof}

\end{document}